\theoremstyle{plain}
\numberwithin{equation}{section}
\newtheorem{theorem}[equation]{Theorem}
\newtheorem{proposition}[equation]{Proposition}
\newtheorem{lemma}[equation]{Lemma}
\theoremstyle{definition}
\newtheorem{remark}[equation]{Remark}
\newtheorem{example}[equation]{Example}
\newtheorem*{assump A}{Assumption A}
\newtheorem*{assump B}{Assumption B}
\def    \R  {{\Bbb R}}
\def    \Z  {{\Bbb Z}}
\def    \Q  {{\Bbb Q}}
\def    \CP {{\Bbb {CP}}}
\def    \P {{\Bbb {P}}}
\def    \C  {{\Bbb C}}
\def    \N  {{\Bbb N}}
\def  \deg {{\operatorname{deg}}}
\def    \Tilde  {\widetilde}
\def    \ut     {\Tilde{u}}
\def    \Gt     {\Tilde{G}}
\begin{document}
\title[Certain circle actions on K\"ahler manifolds] {Certain circle actions on
 K\"ahler manifolds}

\author{Hui Li}
\address{School of mathematical Sciences, Box 173\\
               Suzhou University \\
               Suzhou, 215006, China.}
        \email{hui.li@suda.edu.cn}

\thanks{2000 classification. 53D05, 53D20, 55N25, 57R20, 32H02, 14J70, 53D50} \keywords{K\"ahler manifold, Hamiltonian circle action,
biholomorphism, symplectomorphism,
 equivariant cohomology, equivariant 1st Chern class and Euler class, prequantum line bundle}
\begin{abstract}
Let the circle act holomorphically on a
compact K\"ahler manifold $M$ of complex dimension $n$ with moment map
$\phi\colon M\to\R$. Assume the critical set of $\phi$ consists of $3$ connected components, the extrema being isolated points. We show that $M$ is equivariantly biholomorphic to $\CP^n$, where $n\geq 2$, or to $\Tilde G_2(\R^{n+2})$, the Grassmannian of oriented $2$-planes in $\R^{n+2}$, where $n\geq 3$, with a standard circle action; we also show that $M$ is equivariantly symplectomorphic to $\CP^n$, where $n\geq 2$, or to $\Tilde G_2(\R^{n+2})$, where $n\geq 3$, with a standard circle action.
\end{abstract}

 \maketitle

\section{Introduction}

Let $(M,\omega)$ be a connected compact symplectic manifold.
Assume that $M$ admits a nontrivial\footnote{We always assume the
action is nontrivial unless otherwise stated.} Hamiltonian circle
action with moment map $\phi\colon M\to \R$. We call $M$ a Hamiltonian $S^1$-manifold.
The moment map $\phi\colon M \to \R$ is a Morse-Bott
function whose critical set is exactly the fixed point set $M^{S^1}$ of the circle action.

Let $(M, \omega)$ be a compact Hamiltonian $S^1$-manifold whose even Betti numbers are minimal, i.e., $b_{2i}(M) = 1$ for all $0\leq 2i\leq \dim(M)$; or, whose even Betti numbers are the same as those
of a complex projective space. Some recent work studied certain cases of such Hamiltonian $S^1$-manifolds. The papers \cite{{T}, {M}} studied the case when $\dim(M)=6$, \cite{GS} studied the case when $\dim(M) = 8$ and $M^{S^1}$ consists of isolated points, and \cite{{LT}, {LOS}} studied the case when $M$ is of any dimension and $M^{S^1}$ consists of two connected components.  These papers showed that such Hamiltonian $S^1$-manifolds have certain geometrical and topological invariants the same as those of the complex projective spaces, the Grassmannian of oriented two planes in some $\R^N$, or some other relatively well known K\"ahler manifolds. In particular, for two important and interesting cases when $\dim(M) = 6$ and when the fixed points are isolated, \cite{M} determined the equivariant symplectomorphism type of the manifolds. While for the case when $M$ is $2n$-dimensional and $M^{S^1}$ consists of two connected components, \cite{LOS} showed that there are finitely many such manifolds up to equivariant diffeomorphism; only in a few special cases, we know the equivariant symplectomorphism type of the manifold, in particular in the case when one fixed component is isolated, by a theorem of Delzant \cite{D}, $M$ is equivariantly symplectomorphic to $\CP^n$ with a standard circle action.

Let $(M, \omega)$ be a compact Hamiltonian $S^1$-manifold. In \cite[Section 4]{LT}, it was shown that if $b_{2i}(M) = 1$ for all $0\leq 2i\leq \dim(M)$, then the fixed components of the circle action satisfy
\begin{equation}\label{equality}
\sum_{F \subset M^{S^1} } \left( \dim(F)  + 2 \right) = \dim(M) + 2,
\end{equation}
where the sum is over all the fixed set components. While the converse is false as shown by Example~\ref{ex2} below (when $n$ is even),
it is anticipated that when (\ref{equality}) holds, the cohomology groups of $M$ are relatively ``simple".

\smallskip

In this paper,  we consider a K\"ahler manifold satisfying the following assumption:
\begin{assump A}
Let $(M, \omega, J)$ be a compact K\"ahler manifold of complex dimension $n$ which admits a holomorphic Hamiltonian circle action. Assume the critical set of the moment map consists of $3$ connected components, the extrema being isolated points.
\end{assump A}

Under this assumption, by Morse theory (see Lemma~\ref{1dim}), we can see that the $3$ critical components of the moment map satisfy the equality (\ref{equality}). Hence this is a special case of a Hamiltonian $S^1$-manifold with fixed point set satisfying (\ref{equality}). We have two families of examples satisfying Assumption A.

 \begin{example}\label{ex1}
Consider $\CP^n$, where $n\geq 2$. As a coadjoint orbit of $SU(n+1)$, it admits
a K\"ahler structure. Consider the effective $S^1\subset
SU(n+1)$ action on $\CP^n$ given by
$$\lambda\cdot [z_0, z_1, \cdots, z_n] = [\lambda^{-l} z_0, \lambda^{l'} z_1, z_2, \cdots, z_n],$$
where $l, l'\in\N$, and $\gcd(l, l') = 1$.  The fixed point set $(\CP^n)^{S^1}$ consists of $3$
connected components:
$$X = [z_0, 0, \cdots, 0], \,\, Y = [0, 0, z_2, \cdots, z_n],\,\,\mbox{and}\,\,
Z = [0, z_1, 0, \cdots, 0],$$
where $X$ and $Z$ are isolated points. The action is holomorphic and is Hamiltonian.
The weights of the circle action on the normal bundles of $X$, $Y$ and $Z$ are respectively
 $$\big(l, \cdots, l, l+l'\big), \,\, (-l, l'),\,\, \big(-l', \cdots, -l', -(l+l')\big).$$
\end{example}

\begin{example}\label{ex2}
Consider $\Tilde G_2(\R^{n+2})$, the Grassmannian of oriented $2$-planes in $\R^{n+2}$,
where $n\geq 3$. We have $\dim_{\R}\big(\Tilde G_2(\R^{n+2})\big)=2n$. As a coadjoint orbit of $SO(n+2)$, it has a K\"ahler structure. Consider the $S^1\subset SO(n+2)$ action on
$\Tilde G_2(\R^{n+2})$ induced by the action on $\R^{n+2} =
\C\times\R^n$ given by
$$\lambda\cdot \left(z, x_1, \dots, x_n\right)=
 \left(\lambda\cdot z, x_1, \dots, x_n\right).$$
The fixed point set $\big(\Tilde G_2(\R^{n+2})\big)^{S^1}$ consists of $3$ connected components:
 $$X = \P\left(z, 0, \cdots, 0\right) =\mbox{pt}, \, \, Y = \Gt_2\big(0\times \R^n\big), \,\,\mbox{and}\,\,
 Z = \P\left(z, 0, \cdots, 0\right) = \mbox{pt},$$
 where $X$ and $Z$ correspond to the two orientations on the real $2$-plane $(z, 0, \cdots, 0)$.
 The $S^1$ action is holomorphic and Hamiltonian.
 The weights of the circle action on the normal bundles of $X$, $Y$ and $Z$ are respectively
 $$(1, \cdots,  1), \,\, (-1, 1), \,\, (-1, \cdots,  -1).$$
\end{example}

Our results in Theorems~\ref{biho} and \ref{symp}
equivariantly identify the manifold satisfying Assumption A with one of these examples in the complex and symplectic categories.

\begin{theorem}\label{biho}
Under Assumption A, $M$ is $S^1$-equivariantly biholomorphic to $\CP^n$, where $n\geq 2$, or to $\Tilde G_2(\R^{n+2})$, where $n\geq 3$, with a standard circle action.
\end{theorem}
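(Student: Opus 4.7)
The plan is to extract topological invariants of $M$ from the three-component fixed-point data, identify $M$ as a projective Fano manifold of Picard number one and Fano index at least $n$, and then invoke the Kobayashi--Ochiai characterization of $\CP^n$ and the smooth $n$-dimensional quadric $Q^n\cong\Tilde G_2(\R^{n+2})$.

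First I would exploit Morse--Bott theory for the moment map. By Lemma~\ref{1dim}, the normal bundle of the middle component $Y$ splits into one positive and one negative weight line, so $\dim_\C Y=n-2$ and the Morse index at $Y$ equals $2$; the perfect Morse--Bott equality then reads
$$P_t(M)\;=\;1+t^2\,P_t(Y)+t^{2n},$$
so in particular $b_2(M)=1$. After rescaling $[\omega]$ to a primitive integer class $\eta$ generating $H^2(M;\Z)$, $M$ becomes a smooth projective variety of Picard number one. Complexifying the action to a holomorphic $\C^*$-action, closures of generic orbits are rational curves joining $X$ to $Z$; these pair positively with $c_1(TM)$, so $M$ is Fano and $c_1(TM)=r\eta$ for some positive integer $r$, the Fano index.

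Next I would compute $r$ from the weight data. Let $a_1,\dots,a_n>0$ be the weights at $X$, let $-c_1,\dots,-c_n$ (with $c_i>0$) be those at $Z$, and let $-a,b$ (with $a,b>0$) be the two normal weights at $Y$. Localizing the equivariant first Chern class at each fixed component gives $c_1^{S^1}(TM)|_X=\bigl(\sum_i a_i\bigr)u$ together with analogous restrictions at $Y,Z$; localizing the equivariant extension of $\eta$ yields corresponding evaluations. The Bia\l ynicki-Birula decomposition $M=X^+\sqcup Y^+\sqcup\{Z\}$ identifies the $S^1$-invariant $\CP^1$ obtained by closing the $\C^*$-orbit through the largest-weight direction at $X$; comparing $c_1(TM)$ and $\eta$ against this rational curve, and cross-referencing the restriction data at $Y$, produces numerical identities that force $r\geq n$. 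By the Kobayashi--Ochiai theorem, a Fano $n$-fold of index $r\geq n$ is biholomorphic to $\CP^n$ when $r=n+1$ and to $Q^n\cong\Tilde G_2(\R^{n+2})$ when $r=n$.

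To upgrade the biholomorphism to an $S^1$-equivariant one, I would use that $\operatorname{Aut}(\CP^n)=\mathrm{PGL}(n+1,\C)$ and $\operatorname{Aut}(Q^n)=\mathrm{PO}(n+2,\C)$ act transitively on algebraic one-parameter subgroups with a prescribed fixed-point combinatorics; matching the weight vectors of the given action against those of Example~\ref{ex1} or Example~\ref{ex2} pins down the conjugacy class, and conjugating the abstract biholomorphism by a suitable element of the automorphism group makes it equivariant. The main obstacle will be the numerical step forcing $r\geq n$: a priori the weights at $X$ and $Z$ are unconstrained, and one must combine equivariant restriction data at all three fixed components---together with the integrality afforded by a prequantum line bundle---to rule out Fano indices smaller than $n$. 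The remaining steps rely on standard results: perfection of the Morse--Bott moment map, equivariant localization, and the Kobayashi--Ochiai classification.
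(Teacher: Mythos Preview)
Your outline and the paper's proof share the same spine: establish $c_1(M)=r[\omega]$ with $r\in\{n,n+1\}$ for $[\omega]$ primitive integral, then invoke Kobayashi--Ochiai (Theorem~\ref{nonequibiho}). The paper carries out the index computation as Theorem~\ref{wc1}, via a case analysis on isotropy submanifolds (Lemma~\ref{modulo}), the explicit formula of Lemma~\ref{c1gammaf}, and ABBV localization; you correctly flag this as the main obstacle and name the right ingredients without executing it. The substantive divergence is in how equivariance is obtained. You propose to take the bare biholomorphism $M\cong\CP^n$ or $M\cong Q^n$ and then conjugate inside $\mathrm{PGL}(n{+}1,\C)$ or $\mathrm{PO}(n{+}2,\C)$ to align the transported $S^1$-action with a standard one. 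Note that these groups do \emph{not} act transitively on one-parameter subgroups with the prescribed fixed-point pattern (on $\CP^n$ there is the two-parameter family of Example~\ref{ex1}), so this step genuinely requires the full weight data of Theorem~\ref{wc1}, not merely $r\ge n$, or else a separate classification of holomorphic circle actions on the targets---nontrivial for $Q^n$. The paper instead lifts the $S^1$-action to the prequantum line bundle $L$ with $c_1(L)=[\omega]$ (Proposition~\ref{equibiho}), obtains an induced linear $S^1$-action on $H^0(M;L)$, and checks directly that the Kodaira-type embedding $j\colon M\hookrightarrow\P\bigl(H^0(M;L)\bigr)$ built into the Kobayashi--Ochiai argument is already $S^1$-equivariant (Lemma~\ref{equiv}). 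This makes the equivariance automatic and avoids any a~posteriori matching in the automorphism group of the target; your route is workable but costs more.
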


\begin{theorem}\label{symp}
Under Assumption A, $M$ is $S^1$-equivariantly symplectomorphic to $\CP^n$, where $n\geq 2$, or to $\Tilde G_2(\R^{n+2})$, where $n\geq 3$, with a standard circle action.
\end{theorem}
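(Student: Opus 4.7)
The strategy is to pull a model K\"ahler form back to $M$ via the equivariant biholomorphism of Theorem~\ref{biho} and then apply an equivariant Moser argument along a linear path of invariant K\"ahler forms.

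\emph{Transporting the model.} Let $\psi\colon M\to M_0$ denote the equivariant biholomorphism from Theorem~\ref{biho}, where $(M_0,\omega_0,J_0,\phi_0)$ is $\CP^n$ or $\Tilde G_2(\R^{n+2})$ with its standard K\"ahler form, circle action, and moment map. Set $\omega_1=\psi^*\omega_0$ and $\phi_1=\psi^*\phi_0$; then $\omega_1$ is an $S^1$-invariant K\"ahler form on $(M,J)$ with moment map $\phi_1$.

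\emph{Matching the equivariant cohomology class.} Theorem~\ref{biho} implies $M$ has the cohomology of the corresponding model; in particular $b_2(M)=1$, so after rescaling $\omega$ (and $\phi$) by a positive constant I may assume $[\omega]=[\omega_1]$ in $H^2(M;\R)$. In the Cartan model for $H^*_{S^1}(M)$ both $\omega+\phi u$ and $\omega_1+\phi_1 u$ are equivariantly closed. Write $\omega_1-\omega=d\beta$ for some $S^1$-invariant $1$-form $\beta$ (averaging over $S^1$ preserves exactness). Using $\iota_X\omega=d\phi$ and $\iota_X\omega_1=d\phi_1$, a direct computation shows that $\phi_1-\phi+\iota_X\beta$ is $S^1$-invariant and $d$-closed, hence constant; shifting $\phi$ by this constant brings the two equivariant classes into agreement in $H^2_{S^1}(M;\R)$.

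\emph{Equivariant Moser.} Consider the linear path $\omega_t=(1-t)\omega+t\omega_1$, $t\in[0,1]$, of $S^1$-invariant closed $2$-forms. Each $\omega_t$ is a convex combination of positive $(1,1)$-forms for the common complex structure $J$, hence a K\"ahler form (in particular non-degenerate), with moment map $\phi_t=(1-t)\phi+t\phi_1$. By the matching above, the equivariant class $[\omega_t+\phi_t u]_{S^1}$ is constant in $t$. The equivariant Moser theorem then produces an $S^1$-equivariant diffeomorphism $\rho\colon M\to M$ with $\rho^*\omega_1=\omega$, and $\psi\circ\rho\colon(M,\omega)\to(M_0,\omega_0)$ is the desired equivariant symplectomorphism; the initial rescaling is absorbed into the ``standard'' K\"ahler structure on the target.

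\emph{Main obstacle.} The substantive input is Theorem~\ref{biho} itself; the symplectic upgrade is then essentially formal, the only delicate point being the promotion of ordinary cohomological agreement to equivariant agreement via the moment map shift in the second step. The convexity argument for nondegeneracy of $\omega_t$ uses the K\"ahler hypothesis in a crucial way and is what makes a linear (rather than more delicate) interpolation sufficient.
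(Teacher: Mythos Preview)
Your proof is correct and follows essentially the same route as the paper: pull back the model K\"ahler form via the equivariant biholomorphism of Theorem~\ref{biho}, rescale so the ordinary cohomology classes agree, run along the linear path $\omega_t=(1-t)\omega+t\omega_1$ (whose nondegeneracy is guaranteed by the convex-combination-of-K\"ahler-forms argument using the common $J$), and apply equivariant Moser. The one difference is your second step, promoting ordinary to equivariant cohomological agreement via a moment map shift; the paper omits this, since ordinary Moser with $S^1$-invariant data already yields an $S^1$-equivariant isotopy --- your extra step is harmless but unnecessary.
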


Under the following assumption, where $M$ is a symplectic manifold,
our next result is that the circle action and the first Chern class of the manifold are exactly as those of an example in Example~\ref{ex1} or \ref{ex2}.
\begin{assump B}
Let $(M, \omega)$ be a compact
$2n$-dimensional symplectic manifold which admits an effective Hamiltonian circle action. Assume the critical set of the moment map $\phi$ consists of $3$ connected components $X$, $Y$ and $Z$, where $\phi(X) < \phi(Y) < \phi(Z)$, and $X$ and $Z$ are isolated points.
\end{assump B}

\begin{theorem}\label{wc1}
Suppose Assumption B holds. Then $n$ and the weights of the $S^1$ action respectively on the normal bundles of $X$, $Y$ and $Z$ are
\begin{enumerate}
\item  $n\geq 3; \quad  (1, \cdots, 1),\,\, (-1, 1),\,\,  (-1, \cdots, -1),$ or
\item $n\geq 2; \quad  (l, \cdots, l, l + l'),\,\, (-l, l'), \,\, \left(-l', \cdots, -l',
-(l + l')\right),$  where $l, l'\in \N$, and $\gcd(l, l') = 1$.
\end{enumerate}
Moreover, if $[\omega]$ is a primitive integral class, then
$c_1(M) = n [\omega]$  in case $(1)$, and
$ c_1(M) = (n+1) [\omega]$ in case $(2)$.
\end{theorem}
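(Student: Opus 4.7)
The plan is to split the proof into three parts: dimension, weights, and Chern class. For the dimension, the moment map $\phi$ is a perfect Morse--Bott function, so
\[
P_t(M) = 1 + t^{\lambda_Y} P_t(Y) + t^{2n},
\]
where $\lambda_Y$ is the index of the middle component $Y$. Comparing with Poincar\'e duality on $M$ (and hence on $Y$) forces the positive and negative normal bundles of $Y$ to have equal rank; then the equality~(\ref{equality}), which remains valid under Assumption B by the Morse-theoretic argument of Lemma~\ref{1dim}, pins $\dim Y = 2n - 4$. Consequently the normal bundle of $Y$ in $M$ splits equivariantly as $L_q \oplus L_{-p}$ for some positive integers $p, q$. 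Denote the weights at $X$ by $a_1, \dots, a_n > 0$ and at $Z$ by $-b_1, \dots, -b_n < 0$; by effectiveness of the action, $\gcd(a_1, \dots, a_n) = \gcd(b_1, \dots, b_n) = 1$.

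To pin down the $a_i$ and $b_j$, I would invoke Atiyah--Bott--Berline--Vergne localization applied to the classes $1, [\omega_{S^1}], [\omega_{S^1}]^2, \dots$ in $H^*_{S^1}(M)$, obtaining polynomial identities among the weights, the moment-map values, and characteristic numbers of $Y$. Equivalently, the Duistermaat--Heckman polynomial equals $\frac{t^{n-1}}{(n-1)!\prod a_i}$ on $(\phi(X), \phi(Y))$ and $\frac{(\phi(Z) - t)^{n-1}}{(n-1)!\prod b_j}$ on $(\phi(Y), \phi(Z))$, and matching these polynomials at $t = \phi(Y)$ to the order dictated by $\dim Y$ provides the key constraints. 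A case analysis on whether every $a_i$ equals $p$ splits the solutions into two families: if so, coprimality forces $p = q = 1$ and all $a_i, b_j = 1$, giving Case~(1) (with $n \geq 3$ needed so that $\dim Y \geq 2$); otherwise the combinatorial and integrality constraints force exactly one $a_i$ and one $b_j$ to equal $p + q$, which after setting $p = l$, $q = l'$ with $\gcd(l, l') = 1$ gives Case~(2).

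For the Chern class, the Morse--Bott decomposition gives $b_2(M) = b_0(Y) = 1$, so $H^2(M; \R) \cong \R$, and if $[\omega]$ is primitive integral then $c_1(M) = N[\omega]$ for a unique $N \in \Z$. I would compute $N$ by pairing both sides with an $S^1$-invariant $2$-sphere class, using
\[
\langle c_1(M), [C]\rangle = 2 + \frac{1}{w}\sum_i \bigl( w_i^{+} - w_i^{-} \bigr),
\]
for a sphere $C$ of isotropy weight $w$ with normal weights $w_i^{\pm}$ at its two poles. In Case~(1), a sphere from $X$ to a point of $Y$ with weight $1$ has normal weight sums $n - 1$ at $X$ and $1$ at $Y$, giving $n$; in Case~(2), the sphere from $X$ to $Z$ with weight $l + l'$ has normal weight sums $(n-1)l$ at $X$ and $-(n-1)l'$ at $Z$, giving $n + 1$. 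Combined with $\langle [\omega], [C]\rangle = 1$ for the primitive integral sphere class in each case, these yield $c_1(M) = n[\omega]$ and $c_1(M) = (n+1)[\omega]$ respectively.

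The main obstacle is the case analysis in the second paragraph. The ABBV identities form a nonlinear, typically underdetermined system, and isolating only the two listed solutions requires exploiting the integrality and coprimality of the weights together with the rank-$1$ nature of both the positive and negative normal bundles of $Y$. A secondary subtlety is ensuring that the invariant $2$-spheres used in the Chern-number computation are available in the symplectic (as opposed to K\"ahler) setting; this is handled by reading the Chern-number formula purely in terms of restrictions of $c_1^{S^1}(M)$ and $[\omega_{S^1}]$ in equivariant cohomology.
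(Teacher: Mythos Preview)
Your dichotomy for the weights has a real gap. In the branch ``every $a_i = p$'', coprimality of the $a_i$ gives $p=1$, but nothing you wrote forces $q=1$: effectiveness at a point of $Y$ only yields $\gcd(p,q)=1$, so configurations with all $a_i=1$ and weights $(-1,q)$ at $Y$ for some $q>1$ are not yet excluded. In the other branch, the claim that exactly one $a_i$ and one $b_j$ equal $p+q$ is essentially the entire content of the theorem, and matching Duistermaat--Heckman polynomials across $Y$ only constrains $\prod a_i$, $\prod b_j$ and the moment values, not the individual weights. You also assert $n\geq 3$ in Case~(1) ``so that $\dim Y\geq 2$'' without saying why the semifree $n=2$ configuration (three isolated fixed points with weights $(1,1),(-1,1),(-1,-1)$) is impossible; that needs its own argument.

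The paper closes these gaps with a different organizing principle: instead of raw localization identities it runs a case analysis on the isotropy submanifolds $M^{\Z_k}$ linking the fixed components. The key tool you are missing is Lemma~\ref{modulo} (weights at two fixed points on the same component of $M^{\Z_k}$ agree modulo $k$); combined with the expression for $c_1(M)$ in Lemma~\ref{c1} applied to two different pairs of fixed components, this is what forces the stray weight to be exactly $l+l'$ in the non-semifree cases with isotropy touching $Y$. The residual case, where the only nontrivial isotropy is a sphere from $X$ directly to $Z$, is handled by an ABBV computation of $\int_M c_1^{S^1}(M)$ (or its square, depending on the parity of $\dim Y$), which shows $\Lambda_X=2$. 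The semifree $n=2$ case is excluded by computing $\int_M 1$ via ABBV. For the Chern class the paper never produces invariant spheres at all; it reads off $c_1(M)$ from Lemma~\ref{c1gammaf}, which packages the same equivariant restriction data you allude to but without the sphere formalism.
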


We use equivariant cohomology and Morse theory to prove Theorem~\ref{wc1}. Then we use the K\"ahler condition and a result
by Kobayashi and Ochiai to prove Proposition~\ref{equibiho}, and we use Theorem~\ref{wc1} and Proposition~\ref{equibiho} to prove Theorem~\ref{biho}. Finally, we use Theorem~\ref{biho} to prove Theorem~\ref{symp}.

The work of this paper provides a new idea to study Hamiltonian
 $S^1$-manifolds which are K\"ahler and which have fixed point set satisfying (\ref{equality}).
We hope to be able to use the method to treat more general cases of the fixed point set for compact K\"ahler  Hamiltonian $S^1$-manifolds.

For the case when $(M, \omega)$ is a compact $2n$-dimensional Hamiltonian $S^1$-manifold such that the critical set of the moment map consists of two connected components, $X$ and $Y$,  \cite{LT} showed that the condition  $b_{2i}(M) = 1$ for all $0\leq 2i\leq \dim(M)$ is equivalent to the condition $\dim(X) + \dim(Y) + 2 = \dim(M)$.  Under this assumption, \cite[Theorem 1]{LT} showed that, $c_1(M) = (n+1)[\omega]$, or $c_1(M) = n[\omega]$ with $n\geq 3$ odd, if $[\omega]$ is primitive integral.
If we assume in addition that $M$ is K\"ahler and the $S^1$ action is holomorphic,  then  our method (see Proposition~\ref{equibiho} and our proofs of Theorems~\ref{biho}  and \ref{symp})  implies that $M$ is $S^1$-equivariantly biholomorphic to $\CP^n$, or to $\Tilde G_2(\R^{n+2})$ with
$n\geq 3$ odd, with a standard circle action; and $M$ is $S^1$-equivariantly symplectomorphic to $\CP^n$, or to $\Tilde G_2(\R^{n+2})$ with
$n\geq 3$ odd, with a standard circle action.

\smallskip

A compact  K\"ahler manifold $M$ with the same Betti numbers as $\CP^n$ but is different from $\CP^n$ is called a {\it fake projective space}. For example, $\Tilde G_2(\R^{n+2})$ with $n\geq 3$ odd are such spaces. There are more fake projective spaces other than these, see \cite{PY, IS, PY1} and etc. for the classification and study in complex dimensions $2, 3, 4$. But with stronger topological conditions on a  compact K\"ahler manifold $M$,  one can exclude the fake ones, or show that $M$ is biholomorphic to $\CP^n$: if $M$ has the homotopy type and Pontryagin classes of $\CP^n$ \cite{HK}; if $M$ has the homotopy type of $\CP^n$ and $n\leq 6$ \cite{LW};  or if $M$ has positive first Chern class, and its integral cohomology ring is isomorphic to that of $\CP^n$ with $n\leq 5$  \cite{F}.

Both in the symplectic and K\"ahler cases,  the study of compact Hamitonian $S^1$-manifolds with minimal even Betti numbers, or with the weaker condition -- the fixed point set satisfying (\ref{equality}),  provides a different perspective to the extensive and interesting study of rational cohomology complex projective spaces.

\subsubsection*{Acknowledgement} I thank M. De Cataldo for a discussion on projective
 manifolds. I especially thank the referee for a few helpful suggestions, in particular for a suggestion which led to  the simplified proof of Theorem~\ref{symp}.

\section{Some preliminary results}
In this section, we state and prove some preliminary results for symplectic Hamiltonian $S^1$-manifolds --- for the general case and for the case when the Hamiltonian function has $3$ critical components, with isolated extrema.

 Let $(M, \omega)$ be a Hamiltonian $S^1$-manifold with moment map $\phi\colon M\to\R$, and let
 $F$ be a fixed component of the $S^1$ action. Let us set up the following notations.
\begin{itemize}
\item $2\lambda_F$: the Morse index of $F$ as a critical set of the Morse-Bott function $\phi$;
\item $2\lambda_F^+$: the Morse coindex of $F$ for $\phi$, or the Morse index of $F$ for $-\phi$;
\item $\Gamma_{F}$: the sum of the weights of the $S^1$ action on the normal bundle $N_F$ to $F$;
\item $\Lambda_{F}$:  the product of the weights of the $S^1$ action on the normal bundle $N_F$ to $F$;
\item $\Lambda_{F}^-$:  the product of the weights of the $S^1$ action on the negative normal bundle $N_F^-$ to $F$;
\item $\Lambda_{F}^+$:  the product of the weights of the $S^1$ action on the positive normal bundle $N_F^+$ to $F$.
\end{itemize}

 For a smooth $S^1$-manifold $M$, let $H^*_{S^1}\left(M; R\right) = H^*\left( S^{\infty}\times_{S^1} M; R\right)$ be the $S^1$-equivariant cohomology of the manifold $M$ in $R$ coefficient, where $R$ is a
 coefficient ring. Let $t\in H^2_{S^1}\left(\mbox{pt};  \Z\right) = H^2\left( \CP^{\infty};  \Z\right)$ be a generator.

 The projection $\pi\colon S^{\infty}\times_{S^1} M \to\CP^{\infty}$ induces a natural push forward map $\pi_*\colon H^*_{S^1}(M;\Q)\to H^*(\CP^{\infty};\Q)$, which is given by ``integration over the fiber", denoted
$\int_M$. We will use the following theorem due to Atiyah-Bott, and Berline-Vergne \cite{AB, BV} in Section~\ref{sec.wc1}.
\begin{theorem}\label{AB.BV}
Let the circle act on a compact manifold $M$. Fix a class $\alpha\in H^*_{S^1}(M; \Q)$. Then as elements of $\Q(t)$,
$$\int_M \alpha = \sum_{F\subset M^{S^1}}\int_F \frac{\alpha|_F}{e^{S^1}(N_F)},$$
where the sum is over all fixed components, and $e^{S^1}(N_F)$ is the equivariant Euler class of the normal bundle to $F$.
\end{theorem}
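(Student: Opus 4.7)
The plan is to prove Theorem~\ref{AB.BV} by equivariant localization, viewing both sides as elements of the fraction field $\Q(t)$ of $H^*_{S^1}(\mathrm{pt};\Q)=\Q[t]$. The strategy has two halves: first show that after inverting $t$ a class in $H^*_{S^1}(M;\Q)$ is determined by its restrictions to the fixed components, then invert the equivariant Euler classes of the normal bundles to express the class as an concrete sum of fixed-point contributions.

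For the first half I would establish the localization isomorphism
\[
i^*\colon H^*_{S^1}(M;\Q)\otimes_{\Q[t]}\Q(t) \;\xrightarrow{\ \cong\ }\; H^*_{S^1}(M^{S^1};\Q)\otimes_{\Q[t]}\Q(t).
\]
This reduces to showing that $H^*_{S^1}(U;\Q)$ is $t$-torsion for any open invariant $U\subset M\setminus M^{S^1}$. One covers $U$ by equivariant tubes modelled on $S^1\times_\Gamma V$ with $\Gamma\subset S^1$ finite, uses Mayer--Vietoris, and notes that $H^*_{S^1}(S^1/\Gamma;\Q)=H^*(B\Gamma;\Q)=\Q$ is annihilated by $t$. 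The long exact sequence of the pair $(M,M^{S^1})$ then delivers the isomorphism after localization.

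For the second half, on each fixed component $F$ all weights on the normal bundle $N_F$ are nonzero, so $e^{S^1}(N_F)$ becomes a unit in $H^*_{S^1}(F;\Q)\otimes\Q(t)$ after one splits $N_F$ equivariantly into complex line bundles and reads off the weights as $at + (\text{fibre Chern class})$ with $a\neq 0$. Using the Gysin pushforward $(i_F)_*\colon H^{*}_{S^1}(F;\Q)\to H^{*+\codim F}_{S^1}(M;\Q)$ and the self-intersection identity $i_F^*(i_F)_*\beta=e^{S^1}(N_F)\cdot\beta$, I would check that
\[
\alpha \;-\; \sum_{F\subset M^{S^1}}(i_F)_*\!\left(\frac{\alpha|_F}{e^{S^1}(N_F)}\right)
\]
restricts trivially to every fixed component and hence vanishes after localization. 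Applying $\pi_* = \int_M$ and invoking functoriality $\pi_*\circ(i_F)_*=\int_F$ then produces the stated formula as an equality in $\Q(t)$.

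The main obstacle is the localization isomorphism itself: controlling the equivariant cohomology of the complement $M\setminus M^{S^1}$ requires a careful equivariant slice/tube argument to reduce to the case of a single orbit with finite stabilizer. Once that foundational fact is in hand, the invertibility of $e^{S^1}(N_F)$ is immediate from the weight decomposition, and the remainder is formal manipulation of Gysin pushforwards and the self-intersection formula.
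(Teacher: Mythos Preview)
The paper does not prove Theorem~\ref{AB.BV}; it is quoted as a known result of Atiyah--Bott and Berline--Vergne with references \cite{AB,BV}, and is used only as a black box in Section~\ref{sec.wc1}. There is therefore no proof in the paper to compare your attempt against.

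For what it is worth, your outline is the standard argument (essentially that of \cite{AB}): show $H^*_{S^1}(M\setminus M^{S^1};\Q)$ is $t$-torsion via an equivariant tube/slice cover and Mayer--Vietoris, deduce the localization isomorphism, observe that $e^{S^1}(N_F)$ is invertible after inverting $t$ because every normal weight is nonzero, and then use $(i_F)^*(i_F)_*=e^{S^1}(N_F)\cdot(-)$ together with $(i_F)_*|_{F'}=0$ for $F'\neq F$ to see that $\alpha-\sum_F (i_F)_*\big(\alpha|_F/e^{S^1}(N_F)\big)$ vanishes after localization; pushing forward to a point and using $\pi_*\circ (i_F)_*=\int_F$ gives the formula. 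The only point you might tighten is the passage from ``restricts trivially to every fixed component'' to ``vanishes after localization'': this uses injectivity of $i^*$ on the localized module, which is exactly the localization isomorphism you established in the first half, so the logic is sound.
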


\subsection{Some preliminary results on Hamiltonian $S^1$-manifolds}

\
\medskip

First, we have the following ``equivariant extension" of the cohomology class represented by the symplectic
form.

\begin{lemma}\cite[Lemma 2.7]{LT}\label{extension}
Let the circle act  on a connected compact symplectic manifold $(M,\omega)$
with moment map $\phi \colon M \to \R$.  Let $F$ be a fixed  component.  Then there exists $\ut \in H_{S^1}^2(M;\R)$ so that
$$\ut|_{F'} = [\omega|_{F'}] + t \left(\phi(F) - \phi(F') \right)$$
for any fixed component $F'$. If $[\omega]$ is an integral class,
then  $\ut$ is an integral class.
\end{lemma}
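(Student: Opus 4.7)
The plan is to construct $\ut$ explicitly via the Cartan model of equivariant de~Rham cohomology, and then to upgrade the construction to integrality using a prequantum line bundle.

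In the Cartan model for the circle action, a degree-two equivariant form is an invariant expression $\alpha + t\mu$ with $\alpha \in \Omega^2(M)^{S^1}$ and $\mu \in C^\infty(M)^{S^1}$, and the equivariant differential sends it to $d\alpha + t(d\mu - \iota_X \alpha)$, where $X$ is the fundamental vector field of the $S^1$ action. Since $d\omega = 0$ and the moment-map condition reads $d\phi = \iota_X \omega$, the form $\omega - t\phi$ is equivariantly closed. Adding the equivariantly closed constant $t\phi(F)$, I would define
\[
\ut := [\omega + t(\phi(F) - \phi)] \in H^2_{S^1}(M;\R).
\]
For any fixed component $F'$, the vector field $X$ vanishes identically along $F'$, so the pullback of this representative to $F'$ is just $\omega|_{F'} + t(\phi(F) - \phi(F'))$, yielding $\ut|_{F'} = [\omega|_{F'}] + t(\phi(F) - \phi(F'))$ in $H^2_{S^1}(F';\R)$, as required.

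For integrality, assume $[\omega]$ is integral and choose a prequantum line bundle $L \to M$ with $c_1(L) = [\omega]$. The circle action lifts to a unitary action on $L$ (the obstruction vanishes for a Hamiltonian circle action on a prequantizable manifold), and any two such lifts differ by tensoring with a character of $S^1$, which uniformly shifts every fiberwise weight by a common integer. Fix the unique lift whose weight on $L|_F$ is $0$. The equivariant first Chern class $c_1^{S^1}(L) \in H^2_{S^1}(M;\Z)$ is then an integral equivariant extension of $[\omega]$. By Kostant's formula for the equivariant curvature of a prequantum connection, $c_1^{S^1}(L)$ is represented in the Cartan model by $\omega + t\nu$, where $\nu$ agrees with $\phi(F) - \phi$ up to an overall additive constant determined by the lift; our normalization forces that constant to vanish. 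Hence $c_1^{S^1}(L) = \ut$ in $H^2_{S^1}(M;\R)$, exhibiting $\ut$ as integral.

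The main technical obstacle is the bookkeeping of constants, both the additive ambiguity in $\phi$ and the corresponding choice of lift of the action to $L$. Once the lift is normalized so that the weight at $F$ is zero, everything aligns: integrality is witnessed concretely by $c_1^{S^1}(L)$, the prescribed formula at each $F'$ holds by construction, and in particular all differences $\phi(F) - \phi(F')$ turn out to be integers, which is exactly the expected consequence of integrality of $[\omega]$.
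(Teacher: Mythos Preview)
The paper does not supply a proof of this lemma; it is quoted verbatim from \cite[Lemma~2.7]{LT} and used as a black box. So there is no ``paper's own proof'' to compare against, and I will just assess your argument on its merits.

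The first half of your proof is exactly the standard construction and is fine: in the Cartan model the equivariant form $\omega + t(\phi(F)-\phi)$ is equivariantly closed, and its restriction to each fixed component $F'$ is visibly $[\omega|_{F'}] + t(\phi(F)-\phi(F'))$.

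The integrality argument is also correct in outline, but you should be aware that one step is doing real work that you have only gestured at. You write that ``the circle action lifts to a unitary action on $L$ (the obstruction vanishes for a Hamiltonian circle action on a prequantizable manifold)''. Via Kostant's formula one always gets a lift of the Lie algebra action; the issue is whether it integrates to an honest $S^1$ action on $L$, and the criterion for that is precisely that the induced weights at the fixed components be integers, i.e.\ that the numbers $\phi(F)-\phi(F')$ lie in $\Z$. But that is essentially the content of the integrality statement you are proving, so invoking Kostant alone is circular. What actually saves you is equivariant formality: for a compact Hamiltonian $S^1$-manifold the restriction $H^2_{S^1}(M;\Z)\to H^2(M;\Z)$ is surjective, so $c_1(L)=[\omega]$ admits \emph{some} integral equivariant lift, equivalently $L$ admits \emph{some} $S^1$-linearization. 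Once such a lift exists, your renormalization (tensoring by the character that kills the weight on $L|_F$) is legitimate, and then the equivariant curvature is represented by $\omega + t(\phi(F)-\phi)$, giving $c_1^{S^1}(L)=\ut$ as you claim. (Note, incidentally, that the paper itself, in Section~4, appeals to \cite[Example~6.10]{GGK} to integrate the Lie algebra action on $L$ only \emph{after} it has arranged, using this very lemma, that the moment map takes integer values on the fixed set --- which confirms that the integration step is not automatic.) If you add one sentence invoking equivariant formality (or the surjectivity of $H^2_{S^1}(M;\Z)\to H^2(M;\Z)$) to justify the existence of the lift, your proof is complete.
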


Now, for a Hamiltonian $S^1$-manifold $M$ with $H^2(M; \R) = \R$, we use the above lemma to express the 1st Chern class $c_1(M)$ of $M$ in terms of data related to any two fixed components of the action.

\begin{lemma}\label{c1}
Let the circle act on a connected compact symplectic manifold
$(M, \omega)$ with moment map $\phi\colon M\to\R$. Assume $H^2(M; \R) = \R$.
 Then
$$c_1(M) = \frac{ \Gamma_{F} - \Gamma_{F'}}{\phi(F') - \phi(F)} [\omega],$$
where $F$ and $F'$ are any two fixed components.
\end{lemma}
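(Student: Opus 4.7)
The plan is to expand the equivariant first Chern class in a convenient basis of $H^2_{S^1}(M;\R)$ and read off the $[\omega]$-coefficient by restricting to fixed points.

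Because the $S^1$-action is Hamiltonian on a compact symplectic manifold, $M$ is equivariantly formal, so $H^2_{S^1}(M;\R) \cong H^2(M;\R) \oplus \R t$. Together with $H^2(M;\R) = \R$, this forces $H^2_{S^1}(M;\R)$ to be two-dimensional; a natural basis is $\{\ut, t\}$, where $\ut$ is the equivariant extension of $[\omega]$ produced by Lemma~\ref{extension}, normalized with respect to the fixed component $F$ so that $\ut|_F = [\omega|_F]$. I would therefore write
$$c_1^{S^1}(M) = a\,\ut + b\,t$$
for some $a, b \in \R$, and observe that the forgetful map $H^*_{S^1}(M;\R) \to H^*(M;\R)$ sends the left side to $c_1(M)$ and the right side to $a[\omega]$, so $c_1(M) = a[\omega]$ and the problem reduces to computing $a$.

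Next, I would restrict both sides to a point $p_G$ in a fixed component $G$, taking in turn $G = F$ and $G = F'$. On the right, Lemma~\ref{extension} gives $\ut|_{p_G} = (\phi(F) - \phi(G))\,t$, since the $[\omega|_G]$-part vanishes in $H^2(p_G) = 0$; this produces the value $\bigl(a(\phi(F) - \phi(G)) + b\bigr)t$. On the left, the decomposition $TM|_G = TG \oplus N_G$ is $S^1$-equivariant with trivial action on $TG$, so splitting $N_G$ equivariantly into weight line bundles $L_\alpha$ and using $c_1^{S^1}(L_\alpha) = c_1(L_\alpha) + \alpha t$ yields $c_1^{S^1}(M)|_{p_G} = \Gamma_G t$. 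Equating the $t$-coefficients at $G = F$ and $G = F'$ gives
$$\Gamma_F = b, \qquad \Gamma_{F'} = a\bigl(\phi(F) - \phi(F')\bigr) + b,$$
and solving produces $a = (\Gamma_F - \Gamma_{F'})/(\phi(F') - \phi(F))$, which is the claimed formula.

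The only thing to watch carefully is the normalization: $\ut$ depends on the choice of reference fixed component, and the sign of $\phi(F) - \phi(F')$ versus $\phi(F') - \phi(F)$ must be tracked. Beyond that there is no real obstacle; the argument rests only on equivariant formality in the Hamiltonian setting, Lemma~\ref{extension}, and the elementary identity $c_1^{S^1}(L) = c_1(L) + \alpha t$ for a weight-$\alpha$ line bundle over a trivially acted-on base.
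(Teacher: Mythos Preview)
Your proof is correct and follows essentially the same approach as the paper: write $c_1^{S^1}(M)$ in the basis $\{\ut, t\}$ of $H^2_{S^1}(M;\R)$, restrict to points in two fixed components to solve for the coefficients, and read off $c_1(M)$ via the forgetful map. The only cosmetic difference is that the paper names the coefficients in the order $at + b\ut$ rather than $a\ut + bt$, and you supply a bit more explicit justification (equivariant formality, the weight-line-bundle splitting) for facts the paper uses without comment.
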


\begin{proof}
Since  $H^2(M; \R) = \R$, $[\omega]\in H^2(M; \R)$ is the generator. So
$t$ and the $\ut$ in Lemma~\ref{extension} are the generators of $H^2_{S^1}(M; \R)$.
We can write the equivariant first Chern Class $c_1^{S^1}(M)$ of $M$ as
$$c_1^{S^1}(M) = a t + b \ut, \,\,\mbox{where $a, b \in\R$}.$$
Then
$$c_1^{S^1}(M)|_f = \Gamma_F t = a t, \,\,\mbox{where $f\in F$ is a point, and}$$
$$c_1^{S^1}(M)|_ {f'} =\Gamma_{F'} t = at + bt\left(\phi(F) - \phi(F')\right), \,\,\mbox{where $f'\in F'$ is a point}.$$
So
$$a = \Gamma_F, \quad  b = (\Gamma_{F'} - \Gamma_F)/\left(\phi(F) - \phi(F')\right).$$
Hence
$$c_1^{S^1}(M) =  \Gamma_F t + \frac{ \Gamma_{F'} - \Gamma_F}{\phi(F) - \phi(F')}\ut.$$
Taking the restriction map $H^2_{S^1}(M; \R)\to H^2(M; \R)$, we get
$$c_1(M) = \frac{ \Gamma_{F'} - \Gamma_F}{\phi(F) - \phi(F')} [\omega].$$
\end{proof}

 The following lemma gives an expression of the equivariant Euler class of the negative normal bundle of
 a fixed component $F$ when its Morse index is the sum $\sum_{\phi(F') < \phi(F)}\left(\dim(F')+2\right)$.

\begin{lemma}\cite[Remark 4.12]{LT}\label{Euler}
Let the circle act on a connected compact  symplectic manifold $(M,
\omega)$ with moment map $\phi\colon M\to \R$.
Let $F$ be a fixed component and assume $\sum_{\phi(F') < \phi(F)}\left(\dim(F')+2\right)=2\lambda_F$.
Then
$$e^{S^1}(N_F^-)=\Lambda_F^- \prod_{\phi(F') < \phi(F)}\left( t + \frac{[\omega|_F]}{\phi(F')-\phi(F)}\right)^{\frac{1}{2}\dim(F')+1},$$
 where $e^{S^1}(N_F^-)$ is the equivariant Euler class of the
negative normal bundle  $N_F^-$ of $F$.
 \end{lemma}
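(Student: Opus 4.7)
Both sides of the identity live in $H^{2\lambda_F}_{S^1}(F;\R)$, so the plan is to exhibit each as the restriction to $F$ of an explicit global equivariant class on $M$, and then compare the two global classes using equivariant Morse theory.

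First, using the equivariant extension $\ut$ from Lemma~\ref{extension} taken with respect to $F$ (so $\ut|_F = [\omega|_F]$ and $\ut|_{F''} = [\omega|_{F''}] + (\phi(F)-\phi(F''))t$ for each fixed component $F''$), I would form
$$R := \Lambda_F^-\prod_{F' < F}\left(t + \frac{\ut}{\phi(F') - \phi(F)}\right)^{\frac{1}{2}\dim F' + 1} \in H^{2\lambda_F}_{S^1}(M;\R),$$
whose total degree is $2\lambda_F$ thanks to the hypothesis $\sum_{F'<F}(\dim F'+2) = 2\lambda_F$. Direct evaluation gives $R|_F$ equal to the right-hand side of the claimed formula. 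For any $F''<F$, restricting to $F''$ and examining the factor indexed by $F'=F''$, the $t$-terms cancel by the very definition of $\ut$ and leave the pure degree-$2$ class $\frac{[\omega|_{F''}]}{\phi(F'')-\phi(F)}$; raising it to the power $\frac{1}{2}\dim F''+1$ places it in $H^{\dim F''+2}(F'';\R)=0$, so $R|_{F''}=0$.

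Next, I take $\tau_F \in H^{2\lambda_F}_{S^1}(M;\R)$ to be the equivariant Poincar\'e dual of the closure of the stable manifold of $F$ under $-\nabla\phi$. Its support lies in $\{\phi\ge\phi(F)\}$, so $\tau_F|_{F''}=0$ for all $F''<F$, while the equivariant Thom isomorphism yields $\tau_F|_F = e^{S^1}(N_F^-)$. Setting $\xi := R - \tau_F$, both constructions give $\xi|_{F''}=0$ for every $F''<F$, so Kirwan injectivity on the sublevel set $M_{\le\phi(F)-\epsilon}$ (whose fixed components are exactly the $F''<F$) forces $\xi$ to restrict to zero there. The long exact sequence of the pair $(M,M_{\le\phi(F)-\epsilon})$ together with the Atiyah-Bott-Kirwan equivariantly perfect Morse theory identifies the relative equivariant cohomology in degree $2\lambda_F$ with contributions from fixed components of moment-map level $\ge\phi(F)$; contributions from components strictly above $F$ (or at the same level but distinct from $F$) have Thom classes supported away from $F$ and hence restrict to $0$ on $F$. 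Consequently $\xi|_F = c\cdot e^{S^1}(N_F^-)$ for some scalar $c\in\R$, giving $R|_F = (1+c)e^{S^1}(N_F^-)$, and matching the leading $t^{\lambda_F}$-coefficients (both equal to $\Lambda_F^-$) forces $c=0$, whence $R|_F = e^{S^1}(N_F^-)$ as claimed.

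The main obstacle is this final step: carefully carrying out the Morse-theoretic identification of $H^{2\lambda_F}_{S^1}(M, M_{\le\phi(F)-\epsilon})$ and verifying that only $\tau_F$ can contribute nontrivially after restriction to $F$. Possible coincidences of moment-map values can be handled by a small perturbation of the Morse-Bott function or by working level-by-level; once that bookkeeping is settled, the leading-coefficient matching closes the proof.
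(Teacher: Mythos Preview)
The paper does not supply its own proof of this lemma: it is simply quoted from \cite[Remark 4.12]{LT}. So there is no in-paper argument to compare against, and your proposal is in fact a complete self-contained proof along the lines one finds in the Tolman--Weitsman/Kirwan circle of ideas that \cite{LT} draws on.

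Your argument is correct. A few remarks. First, the verification that $R|_{F''}=0$ for $F''<F$ is the key computation, and you carry it out correctly: the $F'=F''$ factor becomes a pure ordinary-cohomology class of degree $\dim F''+2$ on $F''$, hence vanishes, killing the whole product. Second, the introduction of $\tau_F$ is not strictly necessary: you can argue directly that $R$ itself restricts to zero on the sublevel set $M^-=\{\phi<\phi(F)\}$ (by Kirwan injectivity there, exactly as you say), so $R|_{M^+}$ lifts to $H^{2\lambda_F}_{S^1}(M^+,M^-)\cong H^0_{S^1}(F)\cong\R$, whence $R|_F=c\cdot e^{S^1}(N_F^-)$; matching the $t^{\lambda_F}$-coefficient $\Lambda_F^-$ on both sides gives $c=1$. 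This is the same idea, just slightly shorter. Third, your caveat about coincident critical values is appropriate but harmless here: if several components share the level $\phi(F)$, the relative group $H^{2\lambda_F}_{S^1}(M^+,M^-)$ splits as a direct sum over them, and only the $F$-summand can contribute nontrivially after restricting to $F$, since the others are supported in disjoint Thom neighborhoods.
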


\subsection{The case when the Hamiltonian function has $3$ critical components, the extrema being isolated}

\
\medskip

 First we show that, the index of the non-extremal critical component of the moment map $\phi$ satisfies the condition
 of Lemma~\ref{Euler}, and $H^2(M; \R)\cong \R$.

\begin{lemma}\label{1dim}
Let the circle act on a compact  symplectic manifold $(M,
\omega)$ with moment map $\phi\colon M\to \R$ such that its critical set consists of $3$ connected components $X$, $Y$ and $Z$,
where $\phi(X) < \phi(Y) < \phi(Z)$, with $X$ and $Z$ being isolated points. Then $2\lambda_Y = 2\lambda_Y^+ = 2$, and $H^2(M; \R) \cong \R$.
\end{lemma}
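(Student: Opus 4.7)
The plan is to use that the moment map $\phi$ of a Hamiltonian circle action is an equivariantly perfect Morse--Bott function (Kirwan, Atiyah--Bott), so
\[
P_t(M)=\sum_{F\subset M^{S^1}}t^{2\lambda_F}P_t(F).
\]
Since $\phi(X)<\phi(Y)<\phi(Z)$ are the only critical values, $X$ must be the global minimum and $Z$ the global maximum of $\phi$, so $\lambda_X=0$ and $2\lambda_Z=2n$. Together with $P_t(X)=P_t(Z)=1$, this reduces the identity to
\[
P_t(M)=1+t^{2\lambda_Y}P_t(Y)+t^{2n}.
\]

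Next I would apply the identical argument to the moment map $-\phi$ of the inverse circle action, under which $Y$ has Morse index $2\lambda_Y^+$. Comparing the two resulting expressions for $P_t(M)$ and cancelling the nonzero polynomial $P_t(Y)$ yields $\lambda_Y=\lambda_Y^+$.

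The core step is then a two-sided pinch on $\lambda_Y$. Connectedness of $M$ gives $b_0(M)=1$, which forces $\lambda_Y\geq 1$: otherwise the $t^0$ coefficient of the displayed polynomial would be $1+b_0(Y)\geq 2$. On the other hand, $[\omega]^n\neq 0$ shows $[\omega]$ is nonzero in $H^2(M;\R)$, so $b_2(M)\geq 1$; since neither the $1$ nor the $t^{2n}$ term contributes in degree $2$, this forces $2\lambda_Y\leq 2$, i.e., $\lambda_Y\leq 1$. Hence $\lambda_Y=\lambda_Y^+=1$, proving $2\lambda_Y=2\lambda_Y^+=2$.

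Finally, reading off the $t^2$-coefficient of $P_t(M)=1+t^2 P_t(Y)+t^{2n}$ gives $b_2(M)=b_0(Y)=1$ (as $Y$ is a connected fixed component), so $H^2(M;\R)\cong\R$. The only nontrivial input is Kirwan's perfectness theorem; I do not anticipate any serious obstacle beyond this bookkeeping.
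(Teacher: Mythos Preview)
Your argument is correct and follows essentially the same route as the paper: both proofs use that $\phi$ is a perfect Morse--Bott function, read off $\lambda_Y=1$ from the degree-$0$ and degree-$2$ Betti numbers (using $b_0(M)=1$ and $[\omega]\neq 0$), and then invoke $-\phi$ for $\lambda_Y^+$. The only implicit step you might make explicit is that $2n>2$ (needed so the $t^{2n}$ term does not contribute in degree $2$), which follows once you know $\lambda_Y=\lambda_Y^+\ge 1$ from $2n=2\lambda_Y+2\lambda_Y^+ +\dim Y\ge 4$.
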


\begin{proof}
Since $\phi$ has $3$ critical components, we have $\dim(M) > 2$. Since $Z$ is isolated, we have $2\lambda_Z = \dim(M) > 2$.
The fact that $\phi$ is a perfect Morse-Bott function gives
\begin{equation}\label{pmb}
\dim H^i(M) = \sum_{F\subset M^{S^1}} \dim H^{i-2\lambda_F}(F).
\end{equation}
Then since $X$ is isolated, and $H^2(M; \R)\neq 0$, (\ref{pmb}) gives
$2\lambda_Y = 2$. Similarly, using $-\phi$, we get $2\lambda_Y^+ = 2$.
Now the fact $H^2(M; \R) \cong \R$ follows from these facts and (\ref{pmb}).
\end{proof}

 Next, for the case stated, we write $c_1(M)$ slightly differently from that in Lemma~\ref{c1} so that we
 can identify an integral class.

\begin{lemma}\label{c1gammaf}
Let the circle act on a compact  symplectic manifold $(M,
\omega)$ with moment map $\phi\colon M\to \R$ such that its critical set consists of $3$ connected components $X$, $Y$ and $Z$,
where $\phi(X) < \phi(Y) < \phi(Z)$, with $X$ and $Z$ being isolated points. Then
$$c_1(M) = \frac{\Gamma_X -\Gamma_Y}{\Lambda_Y^-} \left[\Lambda_Y^-
\,\frac{\omega}{\phi(Y)-\phi(X)}\right],$$
where $[\Lambda_Y^- \,\frac{\omega}{\phi(Y) -\phi(X)}]$ is an integral class.
\end{lemma}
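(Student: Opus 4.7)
The displayed formula is essentially just a rewriting of Lemma~\ref{c1}, so the real content of the lemma is the integrality assertion. The plan is to first apply Lemma~\ref{c1} with the fixed components $F=X$ and $F'=Y$ (permissible since Lemma~\ref{1dim} gives $H^2(M;\R)\cong\R$), obtaining
\[
c_1(M)=\frac{\Gamma_X-\Gamma_Y}{\phi(Y)-\phi(X)}\,[\omega],
\]
and then multiply and divide by $\Lambda_Y^-$ to produce the stated formula. Everything after this is devoted to proving that $\Lambda_Y^-\,[\omega]/(\phi(Y)-\phi(X))$ is an integral class.

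For the integrality argument I would lift to equivariant cohomology. By Lemma~\ref{extension} applied with reference $F=X$, there is an equivariant extension $\ut\in H^2_{S^1}(M;\R)$ of $[\omega]$ satisfying $\ut|_X=0$ and $\ut|_Y=[\omega|_Y]-(\phi(Y)-\phi(X))\,t$. Lemma~\ref{1dim} tells us $2\lambda_Y=2=\dim X+2$, so Lemma~\ref{Euler} applies at $F=Y$ and yields
\[
e^{S^1}(N_Y^-)=\Lambda_Y^-\,t-\Lambda_Y^-\,\frac{[\omega|_Y]}{\phi(Y)-\phi(X)}.
\]
A short computation then shows that the real class $\alpha:=-\frac{\Lambda_Y^-}{\phi(Y)-\phi(X)}\,\ut\in H^2_{S^1}(M;\R)$ has $\alpha|_X=0$ and $\alpha|_Y=e^{S^1}(N_Y^-)$.

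The key step, and the main obstacle, is to verify that $\alpha$ is actually integral. I would do this by identifying $\alpha$ with the Morse--Bott generator $\tilde\alpha_Y\in H^2_{S^1}(M;\Z)$, namely the equivariant Poincar\'e dual of the closure of the unstable manifold of $Y$ (equivalently, the pushforward to $M$ of the equivariant Thom class of the negative normal bundle $N_Y^-$), which is integral by construction and satisfies $\tilde\alpha_Y|_X=0$ and $\tilde\alpha_Y|_Y=e^{S^1}(N_Y^-)$. Equivariant formality of the Hamiltonian $S^1$-action makes $H^2_{S^1}(M;\R)=\R t\oplus\R\ut$ two-dimensional, and the restrictions to $X$ and $Y$ jointly separate classes in this space (the $X$-restriction $t|_X=t$, $\ut|_X=0$ pins down the $t$-coefficient, and the $Y$-restriction then pins down the $\ut$-coefficient, regardless of whether $Y$ is a point). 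Hence $\alpha=\tilde\alpha_Y$ in real coefficients, so $\alpha$ is integral, and applying the forgetful map $H^2_{S^1}(M;\Z)\to H^2(M;\Z)$ (which kills $t$ and sends $\ut$ to $[\omega]$) yields the claimed integrality of $\Lambda_Y^-\,[\omega]/(\phi(Y)-\phi(X))$. The existence of the integral Morse--Bott lift $\tilde\alpha_Y$ is standard equivariant-topological machinery for Hamiltonian $S^1$-manifolds; justifying it, or citing the appropriate reference, is the only place where care is needed.
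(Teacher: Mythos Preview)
Your proposal is correct and follows essentially the same route as the paper: apply Lemma~\ref{c1} for the displayed formula, compute $e^{S^1}(N_Y^-)$ via Lemmas~\ref{1dim} and~\ref{Euler}, write down its real equivariant extension to $M$, and then identify that extension with an integral class by a uniqueness argument before passing through the forgetful map. The only cosmetic difference is in the uniqueness step: you invoke the integral Morse--Bott Thom class $\tilde\alpha_Y$ together with equivariant formality, whereas the paper reaches the same conclusion via the long exact sequence of the pair $(M,\phi^{-1}((-\infty,\phi(Z))))$, which gives $H^2_{S^1}(M;\Z)\cong H^2_{S^1}(M^-;\Z)$ and hence the desired uniqueness.
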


\begin{proof}
By Lemma~\ref{extension}, we can take $\ut\in H^2_{S^1}(M; \R)$ so that $\ut|_Y = [\omega|_Y]$ and $\ut|_X = t \big(\phi(Y) - \phi(X)\big)$. Using this together with  Lemmas~\ref{1dim} and \ref{Euler}, we get
\begin{equation}\label{restrY}
e^{S^1}\left(N_Y^-\right) = \Lambda_Y^- \left( t + \frac{[\omega|_Y]}{\phi(X) - \phi(Y)}\right) = \Lambda_Y^- \left( t + \frac{\ut|_Y}{\phi(X) - \phi(Y)}\right).
\end{equation}
 Since $M^{S^1}$ consists of 3 connected components and $Z$ is isolated, $\dim(M)
= 2\lambda_Z \geq 4$, so $\deg \big(e^{S^1}\left(N_Y^-\right)\big) =  2 < 2\lambda_Z$.
Let $M^- = \{x\in M\,|\, \phi(x) < \phi(Z)\}$. Consider the
long exact sequence for the pair $(M, M^-)$ in equivariant cohomology with $\Z$ coefficients, since $H^2_{S^1}(M, M^-; \Z) =H^{2 - 2\lambda_Z }_{S^1}(Z; \Z) = 0$ and $H^3_{S^1}(M, M^-; \Z) = H^{3 - 2\lambda_Z }_{S^1}(Z; \Z) = 0$, the long
exact sequence splits into the short exact sequence
\begin{equation}\label{short}
0\to H^2_{S^1}(M; \Z)\to H^2_{S^1}(M^-; \Z)\to 0.
\end{equation}
Consider the class $\Tilde {e^{S^1}\left(N_Y^-\right)} = \Lambda_Y^- \left( t + \frac{\ut}{\phi(X) - \phi(Y)}\right)\in H^2_{S^1}(M; \R)$. By (\ref{restrY}), its restriction to $Y$ is
$e^{S^1}\left(N_Y^-\right)$; clearly, its restriction to $X$ is $0$.
By (\ref{short}), $\Tilde {e^{S^1}\left(N_Y^-\right)}$ is the unique class on $M$ having this property.
So $e^{S^1}\left(N_Y^-\right)$ is integral implies $\Tilde {e^{S^1}\left(N_Y^-\right)}$ is integral. Taking the restriction map $H^*_{S^1}(M; \R)\to H^*(M; \R)$ for the class $\Tilde {e^{S^1}\left(N_Y^-\right)}$,  we get that $[\Lambda_Y^- \,\frac{\omega}{\phi(X) - \phi(Y)}]$ is an integral class.

 By Lemma~\ref{1dim}, $H^2(M; \R) = \R$. Now apply Lemma~\ref{c1} using the fixed components $X$ and $Y$.
\end{proof}

\section{Proof of Theorem~\ref{wc1}}\label{sec.wc1}

Let $(M, \omega)$ be a symplectic $S^1$-manifold. An {\it isotropy submanifold} $M^{\Z_k}\subsetneq M$ is a symplectic submanifold which is not fixed by the $S^1$ action, but is fixed by the $\Z_k$ action for some $k > 1$.  We need the following lemma in the proof of Theorem~\ref{wc1}.

\begin{lemma}\cite[Lemma 2.6]{T}\label{modulo}
Let the circle act symplectically on a compact symplectic manifold $(M, \omega)$.
Let $p$ and $q$ be fixed points which lie on the same component $N$
of $M^{\mathbb Z_k}$ for some $k>1$. Then the weights of the action
at $p$ and at $q$ are equal modulo $k$.
\end{lemma}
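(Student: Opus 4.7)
The plan is to exploit the equivariant vector bundle structure of the normal bundle of $N$ in $M$. Since $N$ is a component of $M^{\Z_k}$ and the symplectic form restricts nondegenerately, $N$ is a smooth $S^1$-invariant symplectic submanifold; the $S^1$ action on $M$ preserves $N$, and the subgroup $\Z_k \subset S^1$ acts trivially on $N$ by hypothesis. Thus both $TN|_N$ and the normal bundle $\nu$ of $N$ in $M$ are $S^1$-equivariant vector bundles over the connected manifold $N$, and $\Z_k$ acts fiberwise on each.

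The key step is to decompose $\nu$ into $\Z_k$-isotypic subbundles. Because $\Z_k$ is a finite abelian group acting on the vector bundle $\nu$ over the connected base $N$, we obtain an $S^1$-equivariant splitting
$$\nu = \bigoplus_{a \in \Z/k\Z} \nu_a,$$
where $\nu_a$ is the subbundle on which $\zeta \in \Z_k$ acts as $\zeta^a$. Each $\nu_a$ has locally constant, hence constant, rank $r_a$ on $N$, and by definition of $N$ as a component of $M^{\Z_k}$ (together with the fact that $\Z_k$ acts trivially on $TN$) every nonzero $\nu_a$ with $a \neq 0$ accounts entirely for $\nu$. Since the splitting is $S^1$-equivariant, $S^1$ acts on each $\nu_a$.

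Next I would restrict to the $S^1$-fixed points $p, q \in N$. The tangent space $T_pM$ decomposes as $T_pN \oplus \nu_p$, and $\nu_p$ further decomposes as $\bigoplus_a (\nu_a)_p$. The $S^1$-weights on $T_pN$ all lie in $k\Z$ (since $\Z_k \subset S^1$ acts trivially on $N$), so they reduce to $0 \bmod k$, and the number of such weights equals $\dim_\R N / 2$, which is the same at $p$ and $q$. The $S^1$-weights on $(\nu_a)_p$ are, by definition of the isotypic component, all congruent to $a \bmod k$, and their count is $r_a$, which is independent of the chosen fixed point. The same statement holds at $q$ with identical multiplicities $r_a$. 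Therefore the multiset of $S^1$-weights at $p$ and at $q$, reduced modulo $k$, consists of $\dim_\C N$ copies of $0$ together with $r_a$ copies of $a$ for each $a \in \Z/k\Z \setminus \{0\}$, giving the same multiset.

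The only mildly subtle point is verifying that $N$ is a genuine smooth symplectic submanifold on which the isotypic decomposition is well defined: this is standard, following from the equivariant symplectic slice theorem at any point of $N$ together with the fact that the fixed set of a compact group action on a smooth manifold is a smooth submanifold with tangent space the fixed subspace of the isotropy representation. Once this structural input is in place, the isotypic decomposition argument is straightforward and yields the congruence of weights immediately.
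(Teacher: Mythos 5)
Your proof is correct and takes essentially the same route as the source: the paper itself gives no argument, quoting the lemma from \cite[Lemma 2.6]{T}, and the proof there runs exactly as yours does --- $N$ is an $S^1$-invariant symplectic submanifold on which the weights of $T_pN$ and $T_qN$ are $\equiv 0 \bmod k$, while the normal bundle splits $S^1$-equivariantly into $\Z_k$-isotypic subbundles $\nu_a$ (with $\nu_0=0$ since $N$ is a component of $M^{\Z_k}$) whose constant ranks over the connected base force $r_a$ weights congruent to $a$ at both $p$ and $q$. The one point you gloss over, that an $S^1$-invariant compatible almost complex structure must be chosen so that the complex isotypic pieces $\nu_a$ (distinguishing $a$ from $-a$) and indeed the weights themselves are well defined, is standard and implicit in the cited proof as well.
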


\begin{proof}[Proof of Theorem~\ref{wc1}]
By Lemma~\ref{1dim}, $2\lambda_Y = 2\lambda_Y^+ = 2$. We have $2\lambda_Z = 2n \geq 4$.

{\bf Case (1).} Assume the action is semifree.  Then the weights of the $S^1$ action on the normal bundles of $X$, $Y$ and $Z$ are as in $(1)$ of Theorem~\ref{wc1}.

   Suppose $Y$ is an isolated point, then $\dim(M) = \dim(Y)+2\lambda_Y +2\lambda_Y^+ = 4$,  $e^{S^1}(N_X)= t^2$,  $e^{S^1}(N_Y)= t\cdot (-t)$ and $e^{S^1}(N_Z)= (-t)^2$. Using Theorem~\ref{AB.BV} to integrate $1$ on $M$, we get a contradiction. Hence $\dim(M)\geq 6$.

    Using Lemma~\ref{c1gammaf}, we get
$$c_1(M) = n \left[\frac{\omega}{\phi(Y)
-\phi(X)}\right], \,\,\mbox{where $[\frac{\omega}{\phi(Y) -\phi(X)}]\in H^{2}(M; \Z)$}.$$
If $[\omega]$ is primitive integral, then $\phi(Y) -\phi(X)\in\N$ by Lemma~\ref{extension}; so $\left[\frac{\omega}{\phi(Y)
-\phi(X)}\right] = [\omega]$ and $c_1(M) = n[\omega]$.

{\bf Case (2).} Assume the action is effective and is not semifree.

Case (2a). Assume there is an isotropy submanifold $M_X^Y$ between $X$ and $Y$, which is an $M^{\Z_l}$, where $l> 1$, and there is an isotropy submanifold $M_Y^Z$ between $Y$ and $Z$, which is an $M^{\Z_{l'}}$, where $l' > 1$. We have $\dim(M_X^Y) = \dim(Y) + 2\lambda_Y = \dim(Y) + 2 + \dim(X)$,
and similarly $\dim(M_Y^Z) = \dim(Y) + 2 + \dim(Z)$; moreover, the weights of the normal representation at $Y$ are $(-l, l')$, where $\gcd(l, l') = 1$ since the action is effective. Then the weights of the normal representation at $X$ are $(l, \cdots, l, s)$ for some $s\in\N$. The weights of the normal representation at $Z$ must be $(-l', \cdots, -l', -s)$.

By Lemma~\ref{1dim}, $H^2(M; \R) \cong \R$. By rescaling, we may assume that $[\omega]$ is primitive integral.
Since $2 < 2\lambda_Z$, $[\omega|_{\{m\in M\,|\, \phi(m)<\phi(Z)\}}]$ is primitive integral. Since
the manifold $\{m\in M\,|\, \phi(m)<\phi(Z)\}$ deformation retracts to $M_X^Y$ (the retract is given by the gradient flow of $-\phi$), $[\omega|_{M_X^Y}]$ is primitive integral.
 Similarly, since $ 2 < 2\lambda_X^+$,  $[\omega|_{M_Y^Z}]$ is primitive integral.
 Using \cite[Proposition 6.1]{LT} on $M_X^Y$ and on $M_Y^Z$, we get
 $$\phi(Y)-\phi(X) = l \quad\mbox{and}\quad \phi(Z)-\phi(Y) = l'.$$
 Then using Lemma~\ref{c1} to compute $c_1(M)$ respectively in terms of $X, Y$ and $Y, Z$,
 we get $$s = l + l' \quad\mbox{and}\quad  c_1(M) = (n+1) [\omega].$$

Case (2b). Assume there is an isotropy submanifold $M_X^Y$ between $X$ and $Y$, which is an $M^{\Z_l}$, where $l > 1$, and there is no isotropy submanifold between $Y$ and $Z$. Then the weights of the normal representation at $Y$ are $(-l, 1)$, and the weights of the normal representation at $X$ are $(l, \cdots, l, s)$ for some $s\in\N$. Similarly as in Case (2a), assume  $[\omega]$ is primitive integral, then
  $$\phi(Y)-\phi(X) = l.$$
First assume there is no isotropy submanifold between $X$ and $Z$. Then $s=1$
and the weights of the normal representation at $Z$ are $(-1, \cdots, -1)$. Using Lemma~\ref{c1}
to compute $c_1(M)$ respectively in terms of $X, Y$ and $Y, Z$, it gives a contradiction. Hence there is an isotropy submanifold between $X$ and $Z$, and there is only one, which corresponds to the weight $s$ (hence $s > 1$). So the weights of the normal representation at $Z$ are $(-1, \cdots, -1, -s)$.
Using Lemma~\ref{modulo} for the fixed sets $X$ and $Z$, we have  $l + 1 = a s$, where $a > 0$. Similarly, using the lemma
for the fixed sets $X$ and $Y$, we get $s=
1 + bl$, where $b > 0$. These together give $a=1$ and $b=1$, so
$$s = l +1.$$
Using Lemma~\ref{c1} to compute $c_1(M)$, we get
$$c_1(M) = (n+1) [\omega] \quad\mbox{and}\quad \phi(Z)-\phi(Y) = 1.$$

  For the case when there is an isotropy submanifold between $Y$ and $Z$ and there is no isotropy submanifold
  between $X$ and $Y$, we can discuss similarly and we have similar conclusion as above.

Case (2c). Assume there are no isotropy submanifolds between $X$ and $Y$ and between $Y$ and $Z$. Then the weights of the normal representation at $Y$ are $(-1, 1)$. Since the action is not semifree, there
is an isotropy submanifold between $X$ and $Z$. We will show that there is only one isotropy submanifold
between $X$ and $Z$, which is an $M^{\Z_2}$.  If $\dim(M) = 4$, using Lemma~\ref{modulo}, this is clear.

  Now assume $\dim(M) > 4$. Since $X$ and $Z$ are isolated points, each isotropy submanifold between
$X$ and $Z$ is a sphere. If $W$ is the set of weights at $X$, then the set of weights at $Z$ is $-W$. Then $\Gamma_X = - \Gamma_Z$ and $\Lambda_X = \pm \Lambda_Z$. We also have $\Lambda_Y^- = -1$, $\Lambda_Y^+ = 1$, and $\Gamma_Y = 0$. Assume $[\omega]$ is primitive integral. Then by Lemmas~\ref{c1gammaf} and \ref{extension} (using a similar argument as in {\bf Case (1)}),
\begin{equation}\label{c1m'}
c_1(M) = \Gamma_X\,[\omega],\,\,\mbox{and}\,\, \phi(Y) - \phi(X) = 1.
\end{equation}
So $c_1(M)|_Y = \Gamma_X\,[\omega|_Y]$. By symmetry or using a similar argument as
the above, $\phi(Z) - \phi(Y) = \phi(Y) - \phi(X)=1$.
By Lemmas~\ref{1dim} and \ref{Euler},
$e^{S^1}(N_Y^-) = -t + \frac{[\omega|_Y]}{\phi(Y) - \phi(X)} = -t + [\omega|_Y]$. Using these lemmas for $-\phi$, we get
$e^{S^1}(N_Y^+) = t + \frac{[\omega|_Y]}{\phi(Z) - \phi(Y)} = t + [\omega|_Y]$.
First assume $\dim(Y) = 2(2k+1)$, where $k\geq 0$.
Let $c_1^{S^1}(M)$ be the equivariant first Chern class of $M$. Then $c_1^{S^1}(M)|_X =\Gamma_X t$,
$c_1^{S^1}(M)|_Y = c_1(M)|_Y + t -t =\Gamma_X \,[\omega|_Y]$, $c_1^{S^1}(M)|_Z =\Gamma_Z t$.
We have $e^{S^1}(N_X)=\Lambda_X t^n$, $e^{S^1}(N_Y) =e^{S^1}(N_Y^-)\cdot e^{S^1}(N_Y^+)$ and $e^{S^1}(N_Z)=\Lambda_Z t^n$. Using Theorem~\ref{AB.BV} to compute
$$\int_M c_1^{S^1}(M) = 0,$$
we get
$$\frac{\Gamma_X}{\Lambda_X} - \Gamma_X a + \frac{\Gamma_Z}{\Lambda_Z} = 0,\,\,\mbox{where $a = \int_Y [\omega|_Y]^{\frac{\dim(Y)}{2}} = \int_Y [\omega|_Y]^{2k+1}\in\Z$}.$$
Since $Y$ is symplectic, $a\neq 0$. Since $\dim(M) = 2(2k+1) + 4 = 2(2k' + 1)$, $\Lambda_X = -\Lambda_Z$.  Solving the above equation, we get
$$ a = \frac{2}{\Lambda_X}\in\Z.$$
Hence $\Lambda_X = 2$ ($\Lambda_X\neq 1$ since the action is not semifree) which implies that there can only be
one isotropy submanifold between $X$ and $Z$, which is fixed by $\Z_2$.
Next, assume $\dim(Y) = 2(2k)$, where $k > 0$. We use Theorem~\ref{AB.BV} to compute
$$\int_M \left(c_1^{S^1}(M)\right)^2 = 0.$$ A similar argument as the above shows that there is only one isotropy submanifold between $X$ and $Z$, which is fixed by $\Z_2$.
Hence,  the weights of the action  at $X$ are $(1, \cdots, 1, 2)$ and the weights of the action at $Z$ are $(-1, \cdots, -1, -2)$. Then (\ref{c1m'}) gives
$c_1(M) = (n+1)[\omega].$
\end{proof}

\section{Proof of Theorem~\ref{biho}}

 In this section, we first prove Proposition~\ref{equibiho} below, then we use Proposition~\ref{equibiho} and Theorem~\ref{wc1} to  prove Theorem~\ref{biho}.

 Let us first recall  the following theorem which was proved in \cite{KO}.

\begin{theorem}\label{nonequibiho}
Let $M$ be a compact K\"ahler
manifold of complex dimension $n$. If there exists a positive element $\alpha\in H^{1, 1}(M; \Z)$ such that
$c_1(M)=(n + 1)\alpha$, then $M$ is biholomorphic to $\CP^n$; if
there exists a positive element $\alpha\in H^{1, 1}(M; \Z)$ such that
$c_1(M)=n\alpha$, then $M$ is biholomorphic to a quadratic hypersurface in $\CP^{n+1}$.
\end{theorem}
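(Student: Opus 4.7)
The strategy is to realize $M$ as a projective variety and then identify it among Fano manifolds of large index, using the positive integral class $\alpha$. First, since $\alpha \in H^{1,1}(M; \Z)$ is positive, the Kodaira embedding theorem produces an ample holomorphic line bundle $L$ on $M$ with $c_1(L) = \alpha$; in particular $M$ is projective. Writing $r = n+1$ in the first case and $r = n$ in the second, the hypothesis becomes $K_M^{-1} \cong L^r$, exhibiting $(M,L)$ as a polarized Fano manifold whose Fano index equals $r$. The plan is then to prove that a Fano manifold of dimension $n$ and index $n+1$ is biholomorphic to $\CP^n$, and that one of index $n$ is biholomorphic to a smooth quadric in $\CP^{n+1}$.

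Next, I would extract cohomological information from the maximal-index condition. Writing $K_M = L^{-r}$, Kodaira vanishing gives $H^i(M, L^k) = 0$ for all $i > 0$ and $k > -r$, so that $h^0(M, L^k) = \chi(M, L^k)$ is a polynomial in $k$ whose leading coefficient is $L^n/n!$. Combined with Hirzebruch-Riemann-Roch and Serre duality, this yields a lower bound for $\dim |L|$ and hence a linear system of some dimension $N \geq n$ inducing a map $\phi_L$ from $M$ to $\CP^N$. The aim is to show that $\phi_L$ is a closed embedding and that $\phi_L(M)$ has the smallest possible degree, namely $L^n = 1$ in the case $r = n+1$ and $L^n = 2$ in the case $r = n$.

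The main obstacle is precisely this last step. The original Kobayashi-Ochiai argument is a delicate cohomological induction: one uses vanishing theorems for twists of $\Omega^p_M$ (forced by the maximal-index hypothesis) to show that the base locus of $|L|$ is empty, that $\phi_L$ separates points and tangent vectors, and that the resulting embedded variety has the claimed low degree. A cleaner modern route invokes Mori's bend-and-break theorem: through a general point of $M$ there passes a rational curve whose $(-K_M)$-degree is at most $n+1$, so its $L$-degree is at most $(n+1)/r$, forcing $L$-degree equal to $1$; this family of rational lines furnishes the embedding and, by the classification of varieties of minimal degree, identifies $\phi_L(M)$ as either $\CP^n$ (degree $1$) or a smooth non-degenerate quadric in $\CP^{n+1}$ (degree $2$).
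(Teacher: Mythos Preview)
The paper does not prove this theorem itself: it is quoted from Kobayashi--Ochiai \cite{KO}, and the paper only sketches the outline of their argument (existence of an ample line bundle $L$ with $c_1(L)=\alpha$, computation of $\dim H^0(M;L)$ as $n+1$ or $n+2$, base-point-freeness of $|L|$, and the resulting biholomorphism $j\colon M\to\P(H^0(M;L))$ or onto a quadric therein). Your first two paragraphs follow exactly this Kobayashi--Ochiai strategy, so in that respect your proposal matches the paper's account.

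Your third paragraph, however, offers a genuinely different route for the hardest step. The paper (following \cite{KO}) indicates that base-point-freeness, very-ampleness, and the degree computation are obtained by direct cohomological arguments---vanishing theorems for twists of $\Omega^p_M$ and an inductive analysis of the linear system $|L|$. You instead propose Mori's bend-and-break to produce rational curves of $L$-degree one and then invoke the classification of varieties of minimal degree. This is correct and is the modern way to recover Kobayashi--Ochiai; it is conceptually cleaner but relies on substantially deeper machinery (characteristic-$p$ reduction, deformation theory of maps from curves) that postdates \cite{KO}. The original approach has the advantage of being entirely elementary Hodge-theoretic and self-contained, which is why the paper can summarize it in a few lines and then make it equivariant. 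Either route is acceptable, but if you want to parallel the paper you should flesh out the cohomological induction rather than appeal to Mori theory.
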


 \begin{proposition}\label{equibiho}
 Let $(M, \omega, J)$ be a compact K\"ahler manifold of complex dimension $n$, which admits a holomorphic Hamiltonian circle action. Assume that $[\omega]$ is an integral class. If $c_1(M)=(n + 1)[\omega]$, then $M$ is $S^1$-equivariantly biholomorphic to $\CP^n = \P\left(H^0(M; L)\right)$, and if $c_1(M)=n[\omega]$, then $M$ is $S^1$-equivariantly biholomorphic to a quadratic hypersurface in $\CP^{n+1}=\P\left(H^0(M; L)\right)$, where $L$ is a holomorphic line bundle over $M$ with first Chern class
 $[\omega]$ and $H^0(M; L)$ is its space of holomorphic sections.
 \end{proposition}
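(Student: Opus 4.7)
The plan is to deduce the equivariant statement from the non-equivariant classification in Theorem~\ref{nonequibiho} by lifting the $S^1$-action to the prequantum line bundle $L$ and then applying the Kodaira embedding. First, $[\omega]$ is a positive class in $H^{1,1}(M;\Z)$ (being represented by the K\"ahler form), so Theorem~\ref{nonequibiho} produces a non-equivariant biholomorphism $f\colon M\to\CP^n$ in the case $c_1(M)=(n+1)[\omega]$, and a non-equivariant biholomorphism $f\colon M\to Q$ onto a smooth quadric $Q\subset\CP^{n+1}$ in the case $c_1(M)=n[\omega]$. Either way $M$ is simply connected, so there is a unique holomorphic line bundle $L\to M$ with $c_1(L)=[\omega]$, and under $f$ it corresponds to $\mathcal{O}(1)$ or to $\mathcal{O}(1)|_Q$. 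In both cases $L$ is very ample, with $\dim H^0(M;L)=n+1$ or $n+2$ respectively.

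Next I would lift the holomorphic Hamiltonian $S^1$-action on $M$ to a holomorphic $S^1$-action on the total space of $L$. The Kostant--Souriau prequantization construction, applied to the Chern connection on $L$ whose curvature is a multiple of $\omega$, uses the moment map $\phi$ to produce an infinitesimal lift of the generating vector field to a holomorphic vector field on $L$. Integrating yields an $\R$-action which descends to an $S^1$-action after a suitable integral normalization of $\phi$; the lift is then unique up to tensoring by a character of $S^1$, i.e.\ an overall choice of weight.

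This lifted action induces a linear $S^1$-representation on $V:=H^0(M;L)$, and the Kodaira map $\iota_L\colon M\to\P(V^*)$ sending $p$ to the evaluation functional $s\mapsto s(p)$ is $S^1$-equivariant by construction, since it is built entirely from $L$-theoretic data. As $L$ is very ample, $\iota_L$ is a closed embedding. In the case $c_1(M)=(n+1)[\omega]$, $\dim V=n+1$ and $\iota_L$ is an $S^1$-equivariant biholomorphism $M\to\P(V^*)=\CP^n$; in the case $c_1(M)=n[\omega]$, $\dim V=n+2$ and $\iota_L$ realizes $M$ as an $S^1$-invariant smooth quadric hypersurface in $\P(V^*)=\CP^{n+1}$. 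I expect the main obstacle to lie in the lifting step, namely verifying that the Kostant--Souriau infinitesimal lift integrates to an honest circle action (not merely an $\R$-action) on $L$, which reduces to a careful integrality normalization of the moment map; the remaining cosmetic issue of $\P(V)$ versus $\P(V^*)$ is harmless, since both present the same $\CP^n$ asserted in the proposition.
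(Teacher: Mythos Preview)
Your approach is essentially the same as the paper's: apply Kobayashi--Ochiai (Theorem~\ref{nonequibiho}) for the non-equivariant identification, lift the $S^1$-action to the prequantum line bundle $L$ via the Kostant--Souriau formula, pass to the induced linear action on $H^0(M;L)$, and observe that the Kodaira embedding is then equivariant. The paper carries out exactly this, with Lemma~\ref{equiv} verifying equivariance of the map $j$ explicitly, and handles the one point you flag as the obstacle---integrating the infinitesimal lift to an honest $S^1$-action---by invoking Lemma~\ref{extension} to see that the moment map values at fixed components differ by integers when $[\omega]$ is integral, then shifting $\phi$ into the integer lattice and citing \cite[Example~6.10]{GGK}.
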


The proof of Proposition~\ref{equibiho} is to incorporate the circle action into the proof of Theorem~\ref{nonequibiho}.
The idea of the proof of Theorem~\ref{nonequibiho} is as follows. Since $\alpha\in H^{1, 1}(M; \Z)$, by a result of Kodaira and Spencer, there is a holomorphic line bundle $L$ over $M$ such that
  $c_1(L) = \alpha$; and since $c_1(L)$ is positive,
  $L$ is ample by a theorem of Kodaira.  Let $H^0(M; L)$ be the complex vector space of holomorphic sections of $L$ over $M$. The authors of \cite{KO} showed the following fact.
\begin{lemma}\label{dimhl}
When $c_1(M)=(n + 1) c_1(L)$, $\dim H^0(M; L) = n+1$; and when $c_1(M)= n c_1(L)$, $\dim H^0(M; L) = n+2$.
\end{lemma}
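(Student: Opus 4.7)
The plan is to compute $\dim H^0(M; L)$ via the Euler characteristic $P(k) := \chi(M, L^k)$, which by Hirzebruch--Riemann--Roch is a polynomial in $k$ of degree at most $n$, hence determined by any $n+1$ values. The main input is Kodaira vanishing. Writing $L^k = (L^k \otimes K_M^{-1}) \otimes K_M$, one has $H^i(M, L^k) = 0$ for all $i \geq 1$ whenever $L^k \otimes K_M^{-1}$ is ample, i.e.\ whenever the class $k\alpha + c_1(M)$ is positive; this holds for $k \geq -n$ in case $1$ and for $k \geq -(n-1)$ in case $2$, and in particular for $k=1$ in both cases, so $\dim H^0(M, L) = P(1)$. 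Since $L$ is ample, $h^0(M, L^k) = 0$ for every $k < 0$ (a standard argument: restrict to a curve through any prescribed point of $M$, on which $L^k$ has negative degree). Combining these facts, $P(-j) = 0$ for $j = 1, \ldots, n$ in case $1$ and for $j = 1, \ldots, n-1$ in case $2$. Finally, since $c_1(M)$ is a positive multiple of the ample class $\alpha$, $M$ is Fano, and Kodaira vanishing applied to $K_M^{-1}$ gives $h^j(M, \mathcal{O}) = 0$ for $j \geq 1$, whence $P(0) = \chi(M,\mathcal{O})=1$.

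In case $1$, the $n+1$ values $P(0)=1$ and $P(-1)=\cdots=P(-n)=0$ uniquely determine $P(k)=\binom{k+n}{n}$, and hence $\dim H^0(M,L)=P(1)=n+1$. In case $2$, only $n$ conditions are available so far, so one additional evaluation is needed. I take $k=-n$: since $c_1(L^{-n}) = -n\alpha = c_1(K_M)$, both line bundles have Chern character $e^{-n\alpha}$, and Hirzebruch--Riemann--Roch yields $\chi(M, L^{-n}) = \chi(M, K_M)$. By Serre duality and the vanishing $h^j(M, \mathcal{O}) = \delta_{j,0}$ just established, $\chi(M, K_M) = (-1)^n$. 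Interpolating through the resulting $n+1$ values produces
$$P(k) = \frac{(k+1)(k+2)\cdots(k+n-1)(2k+n)}{n!},$$
and $\dim H^0(M, L) = P(1) = n+2$.

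The main subtlety lies at $k=-n$ in case $2$: although $L^{-n}$ and $K_M$ share the same first Chern class, they may differ as holomorphic line bundles by a torsion element of $\operatorname{Pic}(M)$, so a direct identification of the bundles themselves is not available. This is no obstacle for Euler characteristics, however, since Hirzebruch--Riemann--Roch depends only on the Chern character. Once this is noted, the remaining polynomial interpolation is elementary.
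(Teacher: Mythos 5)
Your proof is correct, and it is essentially the argument behind the statement as the paper uses it: the paper itself offers no proof, quoting the lemma from Kobayashi--Ochiai \cite{KO}, whose original derivation is exactly your scheme---Kodaira vanishing to identify $h^0(M; L^k)$ with $\chi(M, L^k)$ in the relevant range, $h^0(M; L^k)=0$ for $k<0$ by ampleness, $\chi(M, \mathcal{O})=1$ since $M$ is Fano, the extra evaluation $\chi(M, L^{-n})=\chi(M, K_M)=(-1)^n$ in the quadric case, and interpolation of the degree-$n$ polynomial $\chi(M, L^k)$. One cosmetic point: a line bundle with trivial integral first Chern class lies in $\operatorname{Pic}^0(M)$, a complex torus rather than a torsion group in general, so ``torsion element of $\operatorname{Pic}(M)$'' is not quite the right phrase---in fact $H^1(M; \mathcal{O})=0$ here because $M$ is Fano, so $L^{-n}\cong K_M$ on the nose---but your Riemann--Roch argument via the Chern character is valid regardless.
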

Moreover, they showed that $H^0(M; L)$ has no base points, i.e., the elements of $H^0(M; L)$
have no common zeros. Hence, if $\P\left(H^0(M; L)\right)$ is the complex projective
space defined as the set of hyperplanes through the origin in the
vector space $H^0(M; L)$, then we can define a
holomorphic mapping
\begin{equation}\label{emb}
j\colon  M\to\P\left(H^0(M; L)\right)    \quad\mbox{by}
\end{equation}
$$ j(x) = \{s\in H^0(M; L) \,|
\, s(x) = 0\} \quad\mbox{for each $x\in M$}.$$
The authors showed that
the map $j$ from $M$ to its image is biholomorphic. In the case when $c_1(M)=(n + 1) c_1(L)$,
$j(M) =\P\left(H^0(M; L)\right) =\CP^n$; and in the case when $c_1(M)= n c_1(L)$,
$j(M)$ is a quadratic hypersurface in $\P\left(H^0(M; L)\right) =\CP^{n+1}$.

Now, we make the argument work equivariantly under the assumptions of Proposition~\ref{equibiho}.
By the assumptions of Proposition~\ref{equibiho},
 $$c_1(M) = (n + 1)[\omega] \quad\mbox{or}\quad c_1(M) = n[\omega],
  \quad\mbox{where $[\omega]\in H^{1, 1}(M; \Z)$ is positive.}$$
So we have the ample line bundle $L$ as above with $c_1(L) = [\omega]$ and the results in Theorem~\ref{nonequibiho}.

We choose a Hermitian structure $h$ on $L$ and a Hermitian
connection $\bigtriangledown$  such that its curvature form is
$\omega$. Then  $\left(L, h,
\bigtriangledown\right)$ is a (pre)quantum line bundle over $M$.

Let $X_M$ be the vector field generated by the circle action on $M$,
and let $\Tilde{X_{M}}$ be the horizontal lift of $X_M$ to $TL$.
Then
$$X_L = \Tilde{X_{M}} + \phi\,\frac{\partial}{\partial\theta}$$
gives an action of the Lie algebra of $S^1$ on $L$, where
$\frac{\partial}{\partial\theta}$ is the vector field on $L$
generated by the fiberwise multiplication by $e^{i\theta}$.

Let $X, Y, Z$ be the fixed components of the circle action such that $\phi(X) < \phi(Y) < \phi(Z)$.
Since $[\omega]$ is an integral class, Lemma~\ref{extension} implies that
\begin{equation}\label{y-x, z-y}
\phi(Y) - \phi(X)\in \N\quad\mbox{and}\quad \phi(Z)-\phi(Y)\in \N.
\end{equation}
 Since the moment
map is defined up to translation by a constant, (\ref{y-x, z-y})
implies that we may assume the moment map values of the fixed point
set components are in the integral lattice of $\R$. By \cite[Example 6.10]{GGK},
the above action of the Lie algebra of $S^1$  on $L$
integrates to an $S^1$ action on $L$ which is compatible with the
$S^1$ action on $M$. Then $S^1$ acts on $H^0(M; L)$  which is given
by
\begin{equation}\label{s1s}
\left(\lambda\cdot s\right)(x) = \lambda\cdot s(\lambda^{-1}\cdot
x), \quad\mbox{where $\lambda\in S^1$ and $s\in H^0(M; L)$};
\end{equation}
 or infinitesimally given by
$$X\cdot s = - \triangledown_{X_M} s +  \sqrt{-1} \phi\cdot s, \quad\mbox{where $X\in$ Lie$(S^1)$
and $s\in H^0(M; L)$}.$$

The $S^1$ action on $H^0(M; L)$ induces an $S^1$ action on
$\P\left(H^0(M; L)\right)$.
\begin{lemma}\label{equiv}
Under the assumptions of Proposition~\ref{equibiho}, the holomorphic map $j$ in (\ref{emb}) can be made to be $S^1$-equivariant.
\end{lemma}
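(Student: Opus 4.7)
The plan is a short direct verification that uses the lift of the $S^1$-action from $M$ to $L$ already constructed in the paragraph preceding the lemma. First I would record the setup: since $[\omega]$ is integral, (\ref{y-x, z-y}) lets us normalize $\phi$ so that $\phi(X), \phi(Y), \phi(Z)\in\Z$, and then by \cite[Example 6.10]{GGK} the infinitesimal action $X_L = \widetilde{X_M} + \phi\,\partial/\partial\theta$ integrates to an honest holomorphic $S^1$-action on $L$ covering the given action on $M$. This action is $\C$-linear on each fiber of $L$; it induces the linear representation (\ref{s1s}) on $H^0(M;L)$, which in turn descends to an $S^1$-action on $\P\left(H^0(M;L)\right)$.

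With these actions in hand, the equivariance of $j$ is a one-line computation. For $\lambda\in S^1$, $x\in M$ and $s\in H^0(M;L)$, setting $t=\lambda\cdot s$ we have by (\ref{s1s})
$$t(\lambda\cdot x) \;=\; (\lambda\cdot s)(\lambda\cdot x) \;=\; \lambda\cdot s(\lambda^{-1}\cdot\lambda\cdot x) \;=\; \lambda\cdot s(x),$$
and since $\lambda\colon L_x\to L_{\lambda\cdot x}$ is a $\C$-linear isomorphism of fibers, $t(\lambda\cdot x)=0$ if and only if $s(x)=0$. Consequently
$$\lambda\cdot j(x) \;=\; \{\,\lambda\cdot s \,\mid\, s(x)=0\,\} \;=\; \{\,t \,\mid\, t(\lambda\cdot x)=0\,\} \;=\; j(\lambda\cdot x),$$
so $j$ intertwines the two $S^1$-actions, as required.

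There is essentially no obstacle once the compatible $S^1$-lift to $L$ is available, and that is precisely what the integrality of the moment map values buys us via \cite[Example 6.10]{GGK}. The phrasing ``can be made to be'' in the statement reflects this choice: the lift of the circle action to $L$ is only determined once we fix an integral normalization of the moment map (equivalently, up to an overall character $\lambda\mapsto\lambda^k$); having made any such choice, the linearized action on $H^0(M;L)$ is pinned down, and the short calculation above shows that the resulting Kodaira map $j$ is automatically $S^1$-equivariant.
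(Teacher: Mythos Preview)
Your proof is correct and follows essentially the same approach as the paper: both use the lift of the $S^1$-action to $L$ (via integrality of the moment map values and \cite[Example 6.10]{GGK}) to obtain the representation (\ref{s1s}) on $H^0(M;L)$, and then verify by a direct substitution using $(\lambda\cdot s)(x)=\lambda\cdot s(\lambda^{-1}\cdot x)$ that the hyperplane $\{s:s(x)=0\}$ is sent by $\lambda$ to $\{s:s(\lambda\cdot x)=0\}$. Your version is in fact slightly more explicit in pointing out that the fiberwise action of $\lambda$ is a $\C$-linear isomorphism, which is why vanishing is preserved; the paper's proof leaves this implicit.
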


\begin{proof}
The above analysis shows that $S^1$ acts on $H^0(M; L)$ and on $\P\left(H^0(M; L)\right)$.
For any $g\in S^1$,  and $s\in H^0(M; L)$, by (\ref{s1s}), we have
$$(g\cdot s) (x) = g\cdot s(g^{-1}\cdot x).$$
So
$$j(g\cdot x) = \{s\in H^0(M; L) \,| \, s(g\cdot x) = 0\} \,\,\mbox{for each $x\in M$,  and}$$
$$g\cdot j(x) = \{g\cdot s\in H^0(M; L) \,| \, g\cdot s(x) = 0\}= \{g\cdot s\in H^0(M; L) \,| \, g\cdot s(g^{-1}g \cdot x) = 0\}$$
$$=\{g\cdot s\in H^0(M; L) \,| \, (g\cdot s) (g\cdot x) = 0\}
\,\,\mbox{for each $x\in M$}.$$ Hence $$j(g\cdot x) =g\cdot j(x)
.$$
\end{proof}

Proposition~\ref{equibiho} follows from Theorem~\ref{nonequibiho} and Lemma~\ref{equiv}.

\begin{remark}\label{kemb}
The embedding $j$ in (\ref{emb}) may not be a K\"ahler embedding.
 In the case when $M$ is a compact homogeneous  K\"ahler manifold which
  admits a homogeneous very ample quantum line bundle $L$,
 for  example, when $M$ is $\CP^n$ or $\Gt_2(\R^{n+2})$, the
 embedding is K\"ahler \cite{{BS}, {CGR}}.
\end{remark}

\begin{proof}[Proof of Theorem~\ref{biho}]
 By Lemma~\ref{1dim}, we can rescale the  K\"ahler form $\omega$ so that $[\omega]$ is primitive integral.

 By quotienting out a finite subgroup of $S^1$ that acts trivially, we may assume that the $S^1$ action is effective.
 Then by Theorem~\ref{wc1}, $c_1(M)=(n + 1)[\omega]$ with $n\geq 2$, or $c_1(M)= n [\omega]$ with $n\geq 3$.

 By Proposition~\ref{equibiho},
 $M$ is equivariantly biholomorphic to $\P\left(H^0(M; L)\right)=\CP^n$
 or to a quadratic hypersurface in $\P\left(H^0(M; L)\right)=\CP^{n+1}$ through the equivariant holomorphic map $j$ in (\ref{emb}).  In particular, the circle action on $j(M)$ has $3$ fixed components biholomorphic to those on $M$.

 Since the $S^1$ action on the complex vector space $H^0(M; L)$ is Hamiltonian,
 the induced $S^1$ actions on $\P\left(H^0(M; L)\right)$ and on the invariant complex manifold $j(M)$ are Hamiltonian.
 In the case when $j(M)$ is a quadratic hypersurface in $\CP^{n+1}$, $j(M)$ can be identified with
 $\Tilde G_2\left(\R^{n+2}\right)$.
\end{proof}

\section{Proof of Theorem~\ref{symp}}

 With the help of Theorem~\ref{biho}, we can prove Theorem~\ref{symp} as follows.

\begin{proof}[Proof of Theorem~\ref{symp}]
By Theorem~\ref{biho}, there is an $S^1$-equivariant biholomorphism
$$f\colon (M, \omega, J) \to  (M', \omega', J'),$$
where $(M', \omega', J')$ represents the K\"ahler manifold $\CP^n$ with $n\geq2$ or $\Tilde G_2\left(\R^{n+2}\right)$ with $n\geq 3$, with the standard structures.
By Lemma~\ref{1dim},   $H^2(M; \R)=\R$.  So by rescaling $\omega$, we may assume that $\omega$ and $f^*\omega'$ represent  the same cohomology class.  Consider the family of forms $\omega_t = (1-t)\omega + t f^*\omega'$, where $t\in [0,1]$.
Each $\omega_t$ is clearly closed.  Each $\omega_t$ is also nondegenerate:  for any point $x\in M$, suppose $X\in T_x M$ is such that $\omega_t (X, Y) = 0$ for all $Y\in T_x M$. In particular, if we take $Y = J X$,
then we get $\omega_t (X, J X) = 0$. Using the facts $\omega(X, JX)\geq 0$, $f_* (J X) = J' f_* X$, and $\omega'(f_*X, J'f_* X)\geq 0$, we get $X= 0$.  So $\omega_t$ is a family of symplectic forms. It represents the same cohomology class and it is $S^1$-invariant. By Moser's theorem \cite{Mo} (Moser's argument without the presence of a group action can be adapted to the case when a compact group acts by choosing invariant objects), there is an $S^1$-equivariant isotopy $\Phi_t$ such that $\Phi_t^* \omega_t = \omega$; in particular, $\Phi_1^* f^*\omega' = \omega$.
\end{proof}


\begin{thebibliography}{99}
\bibitem{AB} M. Atiyah and R. Bott, {\em The moment map and equivariant cohomology}, Topology, {\bf 23} (1984), 1-28.
  \bibitem{BS} S. Berceanu and M. Schlichenmaier, {\em Coherent state
 embeddings, polar divisors and cauchy formulas}, J. of Geom. and Phys., {\bf 34} (2000), 336-358.
 \bibitem{BV} N. Berline and M. Vergne, {\em Classes caract\'eristiques \'equivariantes, formule de localisation en cohomologie \'equivariante}, C. R. Acad. Sci. Paris, {\bf 295} (1982), 539-541.
\bibitem{CGR} M. Cahen, S. Gutt and J.H. Rawnsley, {\em Quantization of  K\"ahler manifolds I,
 geometric interpretation of Berezin's quantization,} J. of Geom. and Phys., {\bf 7} (1990), 45-62.
 \bibitem{D} T. Delzant, {\em Hamiltoniens p\'eriodiques et images convexes de
      l'application moment}, Bull. Soc. Math. France {\bf 116} (1988), 315--339.
\bibitem {F} T. Fujita, {\em On topological characterizations of complex projective spaces and affine linear spaces}, Proc. Japan Acad. Ser. A
            Math. Sci. {\bf 56} (1980), no. 5, 231-234.
\bibitem{GGK} V.  Ginzburg,  V.  Guillemin and Y. Karshon,  {\em  Moment maps, cobordisms, and Hamiltonian group actions},  American Mathematical Society Mathematical Surveys and Monographs {\bf 98},  2002.
\bibitem{GS}L. Godinho and S. Sabatini, {\em  New tools for classifying Hamiltonian circle actions with isolated fixed points}, arXiv:1206.3195v1.
\bibitem{HK} F. Hirzebruch and K. Kodaira, {\em On the complex projective spaces}, J. Math. Pures Appl. {\bf 36} (1957) 201-216.
\bibitem{IS} V. A. Iskowskih, V. V. Shokurov, {\em Biregular theory of Fano 3-folds}, in Algebraic Geometry, Lect. Notes. Math. {\bf 1016}, Springer 1983.
\bibitem{KO} S. Kobayashi and T. Ochiai, {\em Charaterizations of
complex projective spaces and hyperquadrics}, J. Math. Kyoto Univ.  {\bf 13-1} (1973), 31-47.
\bibitem{LOS} H. Li, M. Olbermann and D. Stanley,
{\em  One connectivity and finiteness of Hamiltonian $S^1$-manifolds
with minimal fixed sets}, arXiv:1010.2505.
\bibitem{LT} H. Li and S. Tolman, {\em Hamiltonian circle actions with minimal fixed sets}, Int. J. Math.,
Vol. {\bf 23} (2012), no. 8, 1-36.
\bibitem{LW} A. S. Libgober and J. W. Wood, {\em Uniqueness of the complex structure on K\"ahler manifolds of certain homotopy types},  J. Differential Geom. {\bf 32} (1990), no. 1, 139-154.
\bibitem{M} D. McDuff, {\em Some $6$-dimensional Hamiltonian $S^1$ manifolds},
J. of Topology, {\bf 2} (2009), no. 3, 589--623.
\bibitem{Mo} J. Moser, {\em On the volume elements on a manifold}, Trans. Amer. Math. Soc. {\bf 120}
(1965), 286-294.
\bibitem{PY}  G. Prasad, S. -K. Yeung,  {\em Fake projective planes},  Invent. Math. {\bf 168}  (2007),  no. 2, 321-370.
\bibitem{PY1}  G. Prasad, S. -K. Yeung, {\em Arithmetic fake projective spaces and arithmetic fake Grassmannians},  Amer.  J.  Math. {\bf 131} (2009), 179-207.
\bibitem{T} S. Tolman, {\em On a symplectic generalization of Petrie's conjecture},  Trans. Amer. Math. Soc., {\bf 362} (2010), 3963-3996.
\end{thebibliography}
\end{document}